\newtheorem{theorem}{Theorem}[section]
\newtheorem{lem}[theorem]{Lemma}
\newtheorem{corol}[theorem]{Corollary}
\theoremstyle{definition}
\newtheorem{defi}[theorem]{Definition}
\newtheorem{rmq}[theorem]{Remark}
\def\Gr{{\rm{Gr}}}
\def\dim{{\rm{dim}\,}}
\def\ddim{{\mathbf{dim}\,}}
\def\<{\left<}
\def\>{\right>}
\def\ens#1{\left\{ #1 \right\}}
\def\fl{{\longrightarrow}\,}
\def\A{{\mathbb{A}}}
\def\P{{\mathbb{P}}}
\def\Q{{\mathbb{Q}}}
\def\Z{{\mathbb{Z}}}
\def\e{{\mathbf{e}}}
\def\k{{\mathbf{k}}}
\def\t{{\mathbf{t}}}
\def\x{{\mathbf{x}}}
\def\y{{\mathbf{y}}}
\def\kQ{{\mathbf{k}}Q}
\def\modd{{\textrm{mod-}}}
\def\ens#1{\left\{ #1 \right\}}
\def\Ext{{\rm{Ext}}}
\def\Hom{{\rm{Hom}}}
\def\d{{\partial}}
\def\im{{\rm{im}}}
\def\u{\underline}
\title[Positivity for Regular Cluster Characters]{Positivity for Regular Cluster Characters in Acyclic Cluster Algebras}
\author{G. Dupont}
\address{Universit\'e de Sherbrooke, D\'epartement de Math\'ematiques. 2500, Boul. de l'Universit\'e, J1K 2R1, Sherbrooke QC (CANADA).}
\email{gregoire.dupont@usherbrooke.ca}
\subjclass[2010]{13F60, 16G20}
\begin{document}

\begin{abstract}
	Let $Q$ be an acyclic quiver and let $\mathcal A(Q)$ be the corresponding cluster algebra. Let $H$ be the path algebra of $Q$ over an algebraically closed field and let $M$ be an indecomposable regular $H$-module. We prove the positivity of the cluster characters associated to $M$ expressed in the initial seed of $\mathcal A(Q)$ when either $H$ is tame and $M$ is any regular $H$-module, or $H$ is wild and $M$ is a regular Schur module which is not quasi-simple.
\end{abstract}

\maketitle

\setcounter{tocdepth}{1}
\tableofcontents

\section{Introduction}
	\subsection{Cluster algebras}
		Cluster algebras were introduced by Fomin and Zelevinsky in 2001 in order to design a combinatorial framework for studying total positivity in algebraic groups and canonical bases in quantum groups \cite{cluster1}. Since then, they have shown connections with various areas of mathematics like for instance combinatorics, Lie theory, Teichm\"uller theory, Poisson geometry, Donaldson-Thomas invariants and representation theory of algebras.

		A \emph{cluster algebra} $\mathcal A(Q,\x,\y)$ is defined from a \emph{seed} $(Q,\x,\y)$ where $Q$ is a quiver with $m \geq 1$ vertices and without loops or 2-cycles, $\x=(x_1, \ldots, x_m)$ is a $m$-tuple of indeterminates and $\y=(y_1, \ldots, y_m)$ is a $m$-tuple of elements of a semifield $\P$. 
		It is a $\Z\P$-subalgebra of the \emph{ambient field} $\Q\P(x_1, \ldots, x_m)$ equipped with a distinguished set of generators, called \emph{cluster variables}, gathered in possibly overlapping sets of fixed cardinality $m$, called \emph{clusters}. If the quiver $Q$ is without oriented cycles, the cluster algebra $\mathcal A(Q,\x,\y)$ is called \emph{acyclic}. If the semifield $\P$ is reduced to $\ens{1}$, the cluster algebra is called \emph{coefficient-free} and is denoted by $\mathcal A(Q,\x)$.

	\subsection{Positivity}
		In \cite{cluster1}, Fomin and Zelevinsky proved that every cluster variable in $\mathcal A(Q,\x,\y)$ can be written as a Laurent polynomial with coefficients in $\Z\P$ in any cluster, this is known as \emph{the Laurent phenomenon}. Moreover, they conjectured that Laurent expansions of cluster variables expressed in any cluster are subtraction-free, this is known as \emph{the positivity conjecture}. This conjecture was in particular established for the class of cluster algebras with a bipartite seed by Nakajima \cite{Nakajima:cluster} and the class of cluster algebras from surfaces by Musiker, Schiffler and Williams \cite{MSW:positivity} but it is still open in general. It is in particular open for acyclic cluster algebras. 

		If $Q$ is acyclic, it is well-known that the cluster algebra structure is strongly related to the structure of the category of representations of $Q$ over an algebraically closed field $\k$ and more precisely to the structure of the \emph{cluster category} introduced in \cite{BMRRT} (see for instance \cite{Keller:survey} and references therein). Nevertheless, for the sake of simplicity, we will only consider the category of representations of $Q$, which is enough for the purpose of this article. In this context, there is a map, called \emph{cluster character}, which associates to any representation $M$ of $Q$ a certain Laurent polynomial $X_M \in \Z\P[x_1^{\pm 1}, \ldots, x_m^{\pm 1}]$ \cite{CC,CK2,Palu,FK}. This Laurent polynomial is defined as a generating series for the Euler-Poincar\'e characteristics $\chi(\Gr_{\e}(M))$ of the quiver Grassmannians $\Gr_{\e}(M)$ where $\e$ runs over $\Z_{\geq 0}^n$ (see Section \ref{section:background} for definitions). It is known that this map realizes a bijection between the set of indecomposable \emph{rigid}, that is without self-extension, representations of $Q$ and the cluster variables of $\mathcal A(Q,\x,\y)$ which are distinct from the initial cluster variables $x_1, \ldots, x_m$ \cite{FK}. 

		Recently, Qin and Nakajima gave two independent proofs of the fact that if $M$ is a rigid representation of $Q$, then $\chi(\Gr_{\e}(M)) \geq 0$ \cite{Qin,Nakajima:cluster}. It proves in particular that cluster variables in $\mathcal A(Q,\x,\y)$ can be expressed as subtraction-free Laurent polynomials in the initial cluster of $\mathcal A(Q,\x,\y)$. 

		In this article, we will generalise this result to cluster characters associated to certain indecomposable non-rigid representations of the quiver $Q$. The motivation for this generalisation comes from the fact that such cluster characters arise in the constructions of cluster bases in cluster algebras.

	\subsection{Cluster bases}
		As we mentioned, the two main objectives in the theory of cluster algebras are the comprehension of total positivity in algebraic groups and of canonical bases in quantum groups. The problem of total positivity is related to the positivity conjecture we just discussed. The problem of canonical bases is related to the study of cluster bases we now present.

		Given a cluster algebra $\mathcal A(Q,\x,\y)$, a monomials in cluster variables belonging all to a same cluster is called a \emph{cluster monomial} in $\mathcal A(Q,\x,\y)$ and the set of cluster monomials is denoted by $\mathcal M_Q$. It is conjectured, and proved in several situations that cluster monomials are always independent over the ground ring $\Z\P$, see \cite{Plamondon:ClusterAlgebras}. A \emph{cluster basis} in $\mathcal A(Q,\x,\y)$ is thus defined as a $\Z\P$-linear basis of $\mathcal A(Q,\x,\y)$ containing the set of cluster monomials. The problem of constructing cluster bases was studied in \cite{shermanz,CK1,CZ,Cerulli:A21,Dupont:BaseAaffine,Dupont:transverse,DXX:basesv3,GLS:generic}. If the positivity conjecture holds, every cluster monomial can be written as a subtraction-free Laurent polynomial in any cluster of $\mathcal A(Q,\x,\y)$, such elements in $\mathcal A(Q,\x,\y)$ are called \emph{positive}. A natural problem is thus to construct cluster bases consisting of positive elements. A first step in this direction is to prove that known cluster bases consist of elements which can be written without subtraction in the initial cluster of $\mathcal A(Q,\x,\y)$.

		If $Q$ is a Dynkin quiver, known constructions of bases in $\mathcal A(Q,\x,\y)$ only involve cluster monomials \cite{CK1} and it is known that, in this case, cluster monomials can be realised in terms of cluster characters associated to rigid representations of a Dynkin quiver \cite{CC,FK}. Thus, it follows from the result by Qin and Nakajima that these bases can be expressed as subtraction-free Laurent polynomials in the initial cluster of $\mathcal A(Q,\x,\y)$.

		If $Q$ is acyclic and of infinite representation type, one can construct various cluster bases in $\mathcal A(Q,\x,\y)$ using the so-called \emph{generic bases} considered in \cite{Dupont:genericvariables,GLS:generic}. It is known that all these bases involve cluster characters associated to both rigid representations and indecomposable non-rigid representations of $Q$. Moreover, all the indecomposable representations considered in these constructions are \emph{Schur representations}, that is, have a trivial endomorphism ring \cite{Dupont:genericvariables}. Thus, the study of the positivity of the elements in the generic bases leads to the study of the positivity of the cluster characters associated to indecomposable non-rigid and in particular, regular Schur representations. 

	\subsection{Main results}
		In \cite{cluster4}, Fomin and Zelevinsky proved that the study of $\mathcal A(Q,\x,\y)$ over the ground ring $\Z\P$ associated to an arbitrary semifield $\P$ can be reduced to the case where $\P$ is the tropical semifield generated by the $m$-tuple $\y$. In this case, $\mathcal A(Q,\x,\y)$ is said to have \emph{principal coefficients at the initial seed $(Q,\x,\y)$}. Thus, without any other specifications, we always assume that $\P$ is the tropical semifield generated by $\y$ so that cluster variables and cluster characters belong to the ring $\Z[y_1, \ldots, y_m, x_1^{\pm 1}, \ldots, x_m^{\pm 1}]$.

		\begin{theorem}\label{theorem:wild}
			Let $Q$ be a representation-infinite acyclic quiver and let $M$ be a regular Schur $\kQ$-module which is not quasi-simple. Then $$X_M \in \Z_{\geq 0}[y_1, \ldots, y_m, x_1^{\pm 1}, \ldots, x_m^{\pm 1}].$$
		\end{theorem}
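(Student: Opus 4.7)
The plan is to reduce the positivity of $X_M$ to the positivity of cluster characters of rigid modules, which is supplied by Nakajima--Qin. The central geometric object is the wing $\mathcal{W}(M)$ of $M$: since $M$ is regular Schur of quasi-length $\ell \geq 2$, it sits at the apex of a wing which is a full exact subcategory of $\kQ\modg$ equivalent to $\modd\k\Alin{\ell}$. This wing contains $\ell$ quasi-simples $S_1, \ldots, S_\ell$ as simple objects, and $M$ is the unique indecomposable of maximal quasi-length $\ell$; every other indecomposable of $\mathcal{W}(M)$ has quasi-length strictly less than $\ell$.

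The first substantive step is to prove that the regular submodule $L \subset M$ of quasi-length $\ell-1$ and the quasi-simple regular top $N = M/L$ are rigid in $\kQ\modg$. In the tame case this is standard: the Schur assumption on $M$ forces $\ell$ to be strictly less than the rank of the ambient tube, and indecomposables of quasi-length less than the tube rank are automatically rigid. In the wild case, the Schur hypothesis on $M$ is again essential; it ensures that the inclusion $\mathcal{W}(M) \hookrightarrow \kQ\modg$ preserves the relevant $\Ext^1$ groups for these proper sub-indecomposables, so that rigidity in $\mathcal{W}(M) \simeq \modd\k\Alin{\ell}$ (automatic there) lifts to the ambient category.

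Next, I would exploit the short exact sequence
$$0 \fl L \fl M \fl N \fl 0$$
together with the Caldero--Chapoton / Caldero--Keller multiplication formula. A local computation in $\mathcal{W}(M)$ gives $\dim \Ext^1(N,L) = 1$, with the displayed sequence as the unique non-split extension, and $\dim \Ext^1(L,N) = 0$. The multiplication formula then yields an identity of the form
$$X_L \cdot X_N = X_M + X_B,$$
where the middle term $B$ of the complementary triangle decomposes as a direct sum of indecomposables of $\mathcal{W}(M) \setminus \{M\}$, and is therefore itself rigid (so that $X_B$ is subtraction-free by Nakajima--Qin).

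The hardest part is the final positivity step. The naive rearrangement $X_M = X_L X_N - X_B$ only writes $X_M$ as a difference of subtraction-free polynomials, which does not immediately yield positivity. To conclude, I would proceed by induction on $\ell$, combining the identity above with the analogous identities coming from the Auslander--Reiten sequence ending at $M$ and from the shorter Schur non-quasi-simple modules that appear in $\mathcal{W}(M)$, so as to rewrite the right-hand side as a manifestly subtraction-free expression. The base case $\ell = 2$, where $B$ degenerates to the zero module and the identity takes the shape $X_L X_N = X_M + 1$ for two quasi-simples $L, N$, should be settled by a direct inspection of the Caldero--Chapoton expansion, using the explicit positive monomial structure of cluster characters of regular quasi-simples in the principal-coefficients setting.
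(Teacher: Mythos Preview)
Your setup is sound: the wing description, the rigidity of the quasi-composition factors via Kerner's result, and the one-dimensional $\Ext^1(N,L)$ are all correct and are exactly the structural facts the paper also exploits. The problem is that your argument stops precisely where the difficulty lies. You yourself flag that
\[
X_M = X_L X_N - X_B
\]
is only a difference of subtraction-free elements, and your proposed fix --- ``induction on $\ell$, combining the identity above with the analogous identities'' --- is not a proof but a hope. You give no mechanism for why the combined identities would produce a manifestly positive expression. Even your base case $\ell=2$ is not settled: there the relation is $X_M = X_{R_1}X_{R_2} - \y^{\ddim R_2}$ (equivalently $P_2(q_1,q_2,X_{R_1},X_{R_2})$), and ``direct inspection of the Caldero--Chapoton expansion'' is not an argument; one must exhibit the specific monomial in $X_{R_1}X_{R_2}$ that absorbs $\y^{\ddim R_2}$, and explain why this generalises.

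The paper circumvents this entirely, and the key idea is one you are missing. Writing $X_M = P_n(\y^{\ddim R_1},\ldots,\y^{\ddim R_n},X_{R_1},\ldots,X_{R_n})$ via generalised Chebyshev polynomials, the paper splits each $X_{R_i}$ not into two pieces but into three:
\[
X_{R_i} = \tau_i + \nu_i + \frac{\y^{\ddim R_i}}{\tau_{i+1}},
\]
where $\tau_i = L(R_i,0)$ is the leading monomial and $\nu_i$ collects the interior terms. The crucial observation (Lemma~\ref{lem:key}) is that the \emph{trailing} monomial of $X_{R_i}$ is exactly $\y^{\ddim R_i}$ times the \emph{inverse} of the leading monomial of $X_{R_{i+1}} = X_{\tau^{-1}R_i}$. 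This telescoping between consecutive quasi-simples is what makes the minus signs in $P_n$ disappear: a separate combinatorial lemma (Lemma~\ref{lem:Pnpos}, proved by Taylor expansion and reduction to cluster characters on an equioriented type~$\A$ quiver) shows that $P_n(q_1,\ldots,q_n,t_1+u_1+q_1/t_2,\ldots,t_n+u_n+q_n/t_{n+1})$ is subtraction-free in $q_i,u_i,t_j^{\pm1}$. Rigidity of the $R_i$ then gives $\nu_i \in \Z_{\geq 0}[\y,\x^{\pm1}]$, and since each $\tau_j$ is a single Laurent monomial, the conclusion follows. Your multiplication-formula approach never isolates this leading/trailing cancellation, which is why the subtraction cannot be removed.
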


		As a corollary, we obtain the positivity of the Euler-Poincar\'e characteristics of the quiver Grassmannians for these modules~:
		\begin{corol}\label{corol:posGr}
			Let $Q$ be a representation-infinite acyclic quiver and let $M$ be a Schur regular $\kQ$-module which is not quasi-simple. Then for any dimension vector $\mathbf e \in \Z_{\geq 0}^m$ we have $\chi(\Gr_{\mathbf e}(M)) \geq 0$.
		\end{corol}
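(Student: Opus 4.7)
The plan is to deduce Corollary~\ref{corol:posGr} immediately from Theorem~\ref{theorem:wild} by using the explicit form of the cluster character in principal coefficients. Recall that, in this setting, the cluster character of an $H$-module $M$ admits an expansion of the shape
\[
X_M \;=\; \sum_{\mathbf e \in \Z_{\geq 0}^m} \chi(\Gr_{\mathbf e}(M)) \; \x^{g(\mathbf e,\ddim M)} \; \y^{\mathbf e},
\]
where $g(\mathbf e,\ddim M) \in \Z^m$ denotes the $x$-exponent prescribed by the Caldero--Chapoton formula (this is exactly the definition that I would recall in Section~\ref{section:background} and re-use verbatim here).

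The key observation is that in this expansion the $\y$-exponent is nothing but the dimension vector $\mathbf e$ itself. Therefore, the map
\[
\mathbf e \;\longmapsto\; \bigl(g(\mathbf e,\ddim M),\, \mathbf e\bigr) \in \Z^m \times \Z_{\geq 0}^m
\]
is injective, and distinct dimension vectors $\mathbf e$ produce distinct Laurent monomials in the polynomial ring $\Z[y_1,\ldots,y_m, x_1^{\pm 1}, \ldots, x_m^{\pm 1}]$. Hence $\chi(\Gr_{\mathbf e}(M))$ is literally the coefficient of the monomial $\x^{g(\mathbf e,\ddim M)} \y^{\mathbf e}$ in $X_M$; no cancellations across different $\mathbf e$'s can occur.

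The conclusion is then immediate: by Theorem~\ref{theorem:wild}, $X_M$ belongs to $\Z_{\geq 0}[y_1,\ldots,y_m, x_1^{\pm 1}, \ldots, x_m^{\pm 1}]$, so every monomial coefficient of $X_M$ is nonnegative, and in particular $\chi(\Gr_{\mathbf e}(M)) \geq 0$ for every $\mathbf e \in \Z_{\geq 0}^m$. There is essentially no obstacle to this argument: the only nontrivial input is Theorem~\ref{theorem:wild} itself, and the corollary is a purely formal consequence of the injectivity of the exponent map described above.
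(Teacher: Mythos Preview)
Your argument is correct and is essentially the paper's own proof: both extract $\chi(\Gr_{\mathbf e}(M))$ as a single monomial coefficient by using that the $\y$-exponent equals $\mathbf e$, and then invoke Theorem~\ref{theorem:wild}. The only cosmetic difference is that the paper first specialises all $x_i$ to $1$ before reading off the coefficient of $\y^{\mathbf e}$, whereas you work directly in $\Z[\y,\x^{\pm 1}]$; the underlying injectivity observation is identical.
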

	
		\begin{rmq}
			Note that this result does not hold for any indecomposable representation of an acyclic quiver. Indeed, Derksen, Weyman and Zelevinsky provided an example of a quiver Grassmannian with negative Euler-Poincar\'e characteristics associated to an indecomposable non-rigid representation of a wild quiver \cite[Example 3.6]{DWZ:potentials2}. Nevertheless, our result does not contradict their example since the representation they consider is quasi-simple.
		\end{rmq}

		An essential ingredient in the proof of Theorem \ref{theorem:wild} is that Schur regular modules which are not quasi-simple have rigid quasi-composition factors. When $Q$ is an affine quiver, that is when $Q$ is of type $\widetilde {\mathbb A}, \widetilde {\mathbb D}$ or $\widetilde {\mathbb E}$, we can actually relax this assumption by using a certain combinatorial identity, called \emph{difference property}. This identity relates the cluster characters associated to modules in homogeneous tubes, which do not have rigid quasi-composition factors, to the cluster characters associated to modules in exceptional tubes, which have rigid quasi-composition factors. Nevertheless, the difference property was only established for cluster characters without coefficients \cite{Dupont:BaseAaffine,DXX:basesv3} so that the following theorem may only be proved for coefficient-free cluster characters, that is, for the specialisation $\u X_M = X_M|_{y_1 = \cdots = y_m=1}$.
		\begin{theorem}\label{theorem:tame}
			Let $Q$ be an affine quiver and let $M$ be a $\kQ$-module. Then $$\u X_M \in \Z_{\geq 0}[x_1^{\pm 1}, \ldots, x_m^{\pm 1}].$$
		\end{theorem}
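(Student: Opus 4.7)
The plan is to reduce to indecomposable $M$ via multiplicativity and then treat each class of indecomposable $\kQ$-modules separately, combining the Qin--Nakajima positivity theorem for rigid modules, Theorem~\ref{theorem:wild} for Schur non-quasi-simple regulars, and the \emph{difference property} of \cite{Dupont:BaseAaffine, DXX:basesv3} for the remaining indecomposables. Since $\u X_{M \oplus N} = \u X_M \cdot \u X_N$ and $\Z_{\geq 0}[x_1^{\pm 1}, \ldots, x_m^{\pm 1}]$ is closed under multiplication, it suffices to establish the positivity of $\u X_M$ for each indecomposable direct summand of $M$. The indecomposable $\kQ$-modules split into preprojective, preinjective, and regular components, with the regulars organised into tubes: finitely many exceptional tubes of rank $\geq 2$ and a $\mathbb{P}^1$-family of homogeneous tubes of rank $1$.

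For $M$ preprojective or preinjective, and for $M$ regular of quasi-length strictly smaller than its tube rank, the module $M$ is rigid and positivity of $X_M$, and thus of $\u X_M$ obtained by specialising $y_1 = \cdots = y_m = 1$, is precisely the Qin--Nakajima theorem. For $M$ regular Schur of quasi-length equal to its tube rank $r \geq 2$, Theorem~\ref{theorem:wild} gives positivity of $X_M$ and hence of $\u X_M$. The only remaining indecomposables are then the non-Schur regulars in exceptional tubes (quasi-length strictly greater than $r$) together with all regulars in homogeneous tubes; for both of these I would invoke the difference property. It provides, for each positive integer $n$, an identity expressing the coefficient-free cluster character of a regular indecomposable in a homogeneous tube as the cluster character of a suitable module in an exceptional tube of the same dimension vector, plus a Laurent polynomial with non-negative integer coefficients. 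Iterating this, together with the standard multiplication formula for almost split sequences in the tubes, expresses every remaining $\u X_M$ as a positive combination of characters from the preceding two cases.

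The main obstacle is the bookkeeping required to combine the multiplication formulas in tubes with the difference property so that the recursive expressions for non-Schur and homogeneous-tube regulars remain manifestly positive; the crucial feature one exploits is that in exceptional tubes the quasi-composition factors are rigid, whereas in homogeneous tubes they are not, and the difference property is precisely the combinatorial device bridging this gap. Finally, the restriction of Theorem~\ref{theorem:tame} to $\u X_M$ rather than $X_M$ reflects the fact that the difference property has so far only been established for coefficient-free cluster characters; extending it to principal coefficients would upgrade this theorem accordingly.
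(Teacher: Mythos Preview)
Your reduction to indecomposables and the handling of rigid modules via Qin--Nakajima are correct and match the paper. The gaps are in the two ``remaining'' cases.

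For indecomposables in an exceptional tube of rank $p$ with quasi-length $>p$, your appeal to the difference property together with almost-split multiplication formulas does not give positivity. The difference property relates $F_l(\u X_M)$, for $M$ quasi-simple in a homogeneous tube, to the \emph{difference} $\u X_{R_1^{(lp)}}-\u X_{R_2^{(lp-2)}}$ of exceptional-tube characters; it does not express an arbitrary exceptional-tube character as a positive combination of known-positive ones, and it says nothing about quasi-lengths not divisible by $p$. The mesh relations in a tube give identities like $\u X_{R_i^{(n)}}\u X_{R_{i+1}^{(n)}}=\u X_{R_i^{(n+1)}}\u X_{R_{i+1}^{(n-1)}}+1$, which would require division to isolate $\u X_{R_i^{(n+1)}}$ and do not preserve positivity. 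The paper instead observes that the \emph{proof} of Theorem~\ref{theorem:wild} --- via Lemma~\ref{lem:Pnpos} on generalised Chebyshev polynomials --- only needs the quasi-composition factors to be rigid, not $M$ to be Schur. Since quasi-simples in exceptional tubes are always rigid, that argument applies uniformly to every indecomposable in an exceptional tube, of any quasi-length, and even yields positivity of $X_M$ with coefficients.

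For homogeneous tubes, your formulation of the difference property is not accurate: it does not give $\u X_{\text{homog}}=\u X_{\text{excep}}+(\text{positive})$. The actual identity carries a minus sign, $F_n(\u X_M)=\u X_{R_1^{(np)}}-\u X_{R_2^{(np-2)}}$, so positivity is not immediate. The paper packages this as the $\Delta$-polynomial $\u\Delta_{n,p}=P_{np}-P_{np-2}$ and proves a dedicated positivity result for it (Lemma~\ref{lem:positivitedeltap}), yielding $F_n(\u X_M)\in\Z_{\geq 0}[\x^{\pm 1}]$ directly. One then concludes via $\u X_{M^{(n)}}=S_n(\u X_M)$ and the Chebyshev identity $S_n=\sum_{k=0}^{\lfloor n/2\rfloor}F_{n-2k}$. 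Without this $\Delta$-polynomial lemma the minus sign is a genuine obstruction, and your sketch does not indicate how you would overcome it.
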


		If $Q$ is an affine quiver, we also study three different cluster bases in the coefficient-free cluster algebras $\mathcal A(Q,\x)$. The first basis, which we denote by $\mathcal B_Q$, is a generalisation of the bases considered in \cite{shermanz,Cerulli:A21}. The second basis, denoted by $\mathcal C_Q$, is a generalisation of the basis considered in \cite{CZ} and the third basis $\mathcal G_Q$ is the generic basis considered in \cite{Dupont:genericvariables,GLS:generic}. For these cluster bases, we prove the following theorem~:
		\begin{theorem}\label{theorem:bases}
			Let $Q$ be an affine quiver, then~:
			\begin{enumerate}
				\item $\mathcal B_Q \subset \Z_{\geq 0}[x_1^{\pm 1}, \ldots, x_m^{\pm 1}]$~;
				\item $\mathcal C_Q \subset \Z_{\geq 0}[x_1^{\pm 1}, \ldots, x_m^{\pm 1}]$~;
				\item $\mathcal G_Q \subset \Z_{\geq 0}[x_1^{\pm 1}, \ldots, x_m^{\pm 1}]$.
			\end{enumerate}
		\end{theorem}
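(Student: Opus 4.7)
The plan is uniform for all three bases: I will identify every element of $\mathcal B_Q$, $\mathcal C_Q$, and $\mathcal G_Q$ with the coefficient-free cluster character $\u X_M$ of some (possibly decomposable) $\kQ$-module $M$, and then invoke Theorem \ref{theorem:tame}. The reduction relies on the standard multiplicativity property $\u X_{M \oplus N} = \u X_M \, \u X_N$ of the cluster character, which guarantees that the subsemiring $\Z_{\geq 0}[x_1^{\pm 1}, \ldots, x_m^{\pm 1}]$ is stable under the operation of taking cluster characters of direct sums.

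For the generic basis $\mathcal G_Q$, the identification is essentially tautological: by definition, its elements are generic values of $\u X$ on irreducible components of the module varieties of $\kQ$. On an affine quiver, the generic modules of these components are known to be direct sums of an exceptional (rigid) module and a regular module supported on homogeneous tubes, so that each element of $\mathcal G_Q$ is already of the form $\u X_M$ for an honest $\kQ$-module $M$. Theorem \ref{theorem:tame} then yields the positivity.

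For $\mathcal B_Q$ and $\mathcal C_Q$, which generalise the Sherman--Zelevinsky and Cerulli--Zelevinsky bases, I separate the cluster monomials (handled by the Qin--Nakajima positivity of $\u X_M$ for rigid $M$) from the "tubular" elements, built as Chebyshev-type polynomials in the cluster characters of quasi-simple regular modules. The difference property invoked after Theorem \ref{theorem:wild} will let me rewrite each such polynomial expression as $\u X_N$ for an explicit module $N$ — typically a sum of indecomposable regulars distributed among homogeneous and exceptional tubes — at which point Theorem \ref{theorem:tame} applies to each summand.

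The main obstacle lies in this last step: extracting, from the explicit combinatorial description of $\mathcal B_Q$ and $\mathcal C_Q$, the correct module-theoretic interpretation of each tubular element. The difference property is crucial here, since it converts identities between Chebyshev polynomials of the first and second kind into equalities between cluster characters of well-chosen regular modules. Once these identifications are in place, positivity follows uniformly from Theorem \ref{theorem:tame} combined with multiplicativity on direct sums; the generic basis is the easiest case, while $\mathcal C_Q$ is the most delicate, since its elements involve multiplicative combinations of tubular characters rather than single cluster characters a priori.
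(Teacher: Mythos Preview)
Your plan works cleanly for $\mathcal G_Q$ and $\mathcal C_Q$: the elements $X_\delta^n \u X_R$ and $S_n(X_\delta)\u X_R$ are indeed of the form $\u X_{M^{\oplus n}\oplus R}$ and $\u X_{M^{(n)}\oplus R}$ respectively (with $M$ quasi-simple in a homogeneous tube), so Theorem~\ref{theorem:tame} together with multiplicativity finishes those cases. But your treatment of $\mathcal B_Q$ has a genuine gap, and in fact you have the difficulty ordering backwards: $\mathcal B_Q$ is the delicate case, not $\mathcal C_Q$.

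The problem is that $F_n(X_\delta)$ is \emph{not} the cluster character $\u X_N$ of any $\kQ$-module $N$ once $n\geq 2$. Since $F_n=S_n-S_{n-2}$, one has $F_n(X_\delta)=\u X_{M^{(n)}}-\u X_{M^{(n-2)}}$, a \emph{difference} of cluster characters, and Theorem~\ref{theorem:tame} says nothing about positivity of such differences. The difference property does not repair this: it rewrites $F_n(X_\delta)$ as the polynomial $\u\Delta_{n,p}(\u X_{R_1},\ldots,\u X_{R_{np}})$ in the characters of quasi-simples of an exceptional tube, not as the character of a single module. Positivity then comes from the explicit subtraction-free expansion of $\u\Delta_{n,p}$ (Lemma~\ref{lem:positivitedeltap}), which is exactly the content of Lemma~\ref{lem:Fnpos}; this is an independent computation, not an instance of Theorem~\ref{theorem:tame}.

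The paper organises the argument in the opposite direction from yours: since $z^n$ is a $\Z_{\geq 0}$-combination of the $S_k(z)$ and each $S_k(z)$ is a $\Z_{\geq 0}$-combination of the $F_j(z)$, positivity of $\mathcal B_Q$ implies that of $\mathcal C_Q$ and then $\mathcal G_Q$. It then proves $\mathcal B_Q\subset\Z_{\geq 0}[\x^{\pm 1}]$ by combining Qin--Nakajima for the rigid factor $\u X_R$ with Lemma~\ref{lem:Fnpos} for $F_n(X_\delta)$. Your route for $\mathcal G_Q$ and $\mathcal C_Q$ is a legitimate alternative, but for $\mathcal B_Q$ you must invoke Lemma~\ref{lem:Fnpos} directly rather than attempt a module-theoretic realisation that does not exist.
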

		
	\subsection{Organisation of the paper}
		In Section \ref{section:background}, we fix notations and recall the background concerning cluster characters and the structure of regular modules over representation-infinite hereditary algebras. In Section \ref{section:gCheb}, we study \emph{generalised Chebyshev polynomials} whose combinatorics are closely connected to those of cluster characters in regular components. In Section \ref{section:wild} we use these polynomials to prove Theorem \ref{theorem:wild} and Corollary \ref{corol:posGr}. In Section \ref{section:Delta} we study another family of multivariate polynomials, called \emph{$\Delta$-polynomials}, which are used to prove Theorem \ref{theorem:tame} in Section \ref{section:tame}. Finally, Section \ref{section:bases} is devoted to the precise statement and the proof of Theorem \ref{theorem:bases}.

\section{Notations and background}\label{section:background}
	Let $Q$ denote a finite connected quiver which is acyclic and of infinite representation type. The set of vertices in $Q$ is denoted by $Q_0$ and the cardinality of this set is denoted by $m$. We fix a $Q_0$-tuple of indeterminates $\x=(x_i | i \in Q_0)$, a semifield $\P$ and a $Q_0$-tuple $\y=(y_i\ | \ i \in Q_0)$ of elements of $\P$ such that $\P$ is the tropical semifield generated by $y_1, \ldots, y_m$. We denote by $\mathcal A(Q,\x,\y)$ the cluster algebra with principal coefficients at the initial seed $(Q,\x,\y)$.

	\subsection{Cluster characters}
		Let $\k$ be an algebraically closed field. We denote by $\kQ$ the path algebra of $Q$ over $\k$. We naturally identify the category of (finite dimensional) representations of $Q$ over $\k$ with the category mod-$\kQ$ of finitely generated right-$\kQ$-modules. For any vertex $i \in Q_0$, we denote by $S_i$ the simple representation of $Q$ at $i$. Given a representation $M$ of $Q$, we denote by $\ddim M \in \Z^{Q_0}_{\geq 0}$ its \emph{dimension vector}. We denote by $\<-,-\>$ the \emph{Euler form} on mod-$\kQ$ given by 
		$$\<M,N\> = \dim \Hom_{\kQ}(M,N) - \dim \Ext^1_{\kQ}(M,N) \in \Z$$
		for any $M,N$ in mod-$\kQ$. It is well-known that this form only depends on the dimension vectors.

		For any representation $M$ of $Q$ and any dimension vector $\e \in \Z^{Q_0}_{\geq 0}$, we set
		$$\Gr_{\e} = \ens{N \textrm{ subrepresentation of } M \ | \ \ddim N = \e}.$$
		It is a projective variety, called the \emph{quiver Grassmannian} of $M$ (of dimension $\e$), and we denote by $\chi(\Gr_{\e}(M))$ its Euler-Poincar\'e characteristic (with respect to the simplicial cohomology if $\k$ is the field of complex numbers and to the compactly supported \'etale cohomology if $\k$ is arbitrary). 

		For any representation $M$ of $Q$, the \emph{cluster character associated to $M$} is 
		$$X_M = \sum_{\e \in \Z^{Q_0}_{\geq 0}} \chi(\Gr_{\e}(M)) \prod_{i \in Q_0} y_i^{e_i} x_i^{-\<\e,\ddim S_i\>-\<\ddim S_i, \ddim M - \e\>} \in \Z[\y,\x^{\pm 1}]$$
		where we use the short-hand notation $\Z[\y,\x^{\pm 1}] = \Z[y_1, \ldots, y_m, x_1^{\pm 1}, \ldots, x_m^{\pm 1}]$.

		The \emph{coefficient-free cluster character associated to $M$} is the specialisation of $X_M$ at $y_i=1$ for any $1 \leq i \leq m$, that is,
		$$\u X_M = \sum_{\e \in \Z^{Q_0}_{\geq 0}} \chi(\Gr_{\e}(M)) \prod_{i \in Q_0} x_i^{-\<\e,\ddim S_i\>-\<\ddim S_i, \ddim M - \e\>} \in \Z[\x^{\pm 1}].$$

		This map was first introduced in \cite{CC} and it was later generalised in \cite{CK2,Palu,FK,DWZ:potentials2,Plamondon:clusterchar}. It is known that the cluster character (resp. the coefficient-free cluster character) induces a bijection from the set of indecomposable rigid representations of $Q$ to the set of cluster variables in $\mathcal A(Q,\x,\y)$ (resp. $\mathcal A(Q,\x)$) distinct from $x_1, \ldots, x_m$ \cite{CK2,FK}.

	\subsection{Regular modules over representation-infinite hereditary algebras}
		Throughout the article, we will make a free use of classical results on the structure of regular modules over a representation-infinite hereditary algebra. For details concerning this theory, we refer for instance the reader to \cite{Ringel:1099} for affine quivers and to \cite{Kerner:repwild} for wild quivers.

		We denote by $\tau$ the Auslander-Reiten translation on mod-$\kQ$. An indecomposable $\kQ$-module is called \emph{regular} if it is neither in the $\tau^{-1}$-orbit of a projective module nor in the $\tau$-orbit of an injective module. The Auslander-Reiten quiver of mod-$\kQ$ is denoted by $\Gamma(\modd\kQ)$. The components of $\Gamma(\modd \kQ)$ containing neither projective modules nor injective modules are called \emph{regular components} of $\Gamma(\modd\kQ)$ and it is known that every indecomposable regular module belongs to a regular component.

		Every regular component $\mathcal R$ in $\Gamma(\modd\kQ)$ is of the form $\Z\A_\infty/(\tau^p)$ for some $p \geq 0$ (see for instance \cite[Section VIII.4]{ARS}). If $p \geq 1$, $\mathcal R$ is called a \emph{tube of rank $p$}. If $p=1$, the tube is called \emph{homogeneous}, if $p>1$, the tube is called \emph{exceptional}. If $p=0$, $\mathcal R$ is called a \emph{sheet}.

		Let $\mathcal R$ be a regular component in $\Gamma(\modd\kQ)$ and $M$ be an indecomposable object in $\mathcal R$. Then there exists a unique family $\ens{R^{(0)},R^{(1)} \ldots, R^{(n)}}$ of indecomposable $\kQ$-modules in $\mathcal R$ such that $R^{(0)}=0$ and such that there is a sequence of irreducible monomorphisms
		$$R^{(1)} \fl R^{(2)} \fl \cdots \fl R^{(n)}=M.$$
		$R^ {(1)}$ is called the \emph{quasi-socle} of $M$ and the integer $n$ is called the \emph{quasi-length} of $M$. The module $M$ is called \emph{quasi-simple} if its quasi-length is equal to 1. The quotients $M_i=R^{(i)}/R^{(i-1)}, i=1, \ldots, n$ are called the \emph{quasi-composition factors} of $M$, with the convention that $R^{(0)}=0$. The modules of quasi-length 1 are called \emph{quasi-simple} in $\mathcal R$. Note that $\tau M_i \simeq M_{i-1}$ for every $i=2, \ldots, n$. 

		If $Q$ is an affine quiver, the regular components form a $\P^1(\k)$-family of tubes and at most three of these tubes are exceptional. If $Q$ is wild, every regular component is a sheet. It is well-known that quasi-simple $\kQ$-modules in exceptional (resp. homogeneous) tubes are always (resp. never) rigid and that quasi-simple modules in sheets can be rigid or not.

\section{Generalised Chebyshev polynomials}\label{section:gCheb}
	As it was observed in \cite{Dupont:stabletubes,Dupont:qChebyshev}, the combinatorics of cluster characters associated to regular modules are governed by a family of multivariate polynomials. The aim of this section is to give prove some technical properties of these polynomials which will be useful in the proofs of Theorem \ref{theorem:wild} and Theorem \ref{theorem:tame}.

	Let $q_i,t_i$ with $i \in \Z$ be indeterminates over $\Z$. We set $\mathbf q = \ens{q_i\ | \ i \in \Z}$ and $\mathbf t = \ens{t_i\ | \ i \in \Z}$ and for any subset $J \in \Z$ we set $\mathbf q_J= \ens{q_i\ | \ i \in J}$ and $\mathbf t_J= \ens{q_i\ | \ i \in J}$. For every $n \geq 1$, the \emph{$n$-th quantized generalized Chebyshev polynomial} is the polynomial $P_n(q_1, \ldots, q_n, t_1, \ldots, t_n) \in \Z[\mathbf q_{[1,n]}, \mathbf t_{[1,n]}]$ given by 
	\begin{equation}\label{eq:detPn}
		P_n(q_1, \ldots, q_n,t_1, \ldots, t_n)
		=\det \left[\begin{array}{ccccccc}
			t_n & 1 &&& (0)\\
			q_n & \ddots & \ddots \\
			& \ddots & \ddots & \ddots \\
			& & \ddots & \ddots & 1 \\
			(0)& & & q_2 & t_1
		\end{array}\right].
	\end{equation}
	Note that $P_n$ does not depend on the variable $q_1$ so that it can be viewed as a polynomial in the $2n-1$ variables $q_2, \ldots, q_n, t_1, \ldots, t_n$. Nevertheless, it is usually more convenient to consider $P_n$ as a polynomial in the $2n$ variables $q_1, \ldots, q_n,t_1, \ldots, t_n$.

	\subsection{Partial derivatives of generalised Chebyshev polynomials}
		We first prove that generalised Chebyshev polynomials satisfy simple partial differential equations. 
		\begin{lem}\label{lem:dpSn}
			For every $n \geq 1$ and $1 \leq i \leq n$, we have
			$$\frac{\d}{\d t_i}P_n(\mathbf q_{[1, n]},\mathbf t_{[1, n]})=P_{i-1}(\mathbf q_{[1,i-1]},\mathbf t_{[1,i-1]})P_{n-i}(\mathbf q_{[i+1,n]},\mathbf t_{[i+1,n]})$$
		\end{lem}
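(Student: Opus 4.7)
The plan is to differentiate the determinantal formula (\ref{eq:detPn}) entry-by-entry. The crucial observation is that the variable $t_i$ occurs at a single position of the matrix, namely the diagonal entry in row and column $n-i+1$ (since the diagonal reads $t_n, t_{n-1}, \ldots, t_1$ from top-left to bottom-right). Because a determinant is multilinear in its entries, $\frac{\d}{\d t_i} P_n$ equals the cofactor at that position. Since this is a diagonal entry, the sign is $(-1)^{2(n-i+1)} = +1$, so the derivative is exactly the determinant of the $(n-1) \times (n-1)$ minor obtained by deleting the $(n-i+1)$-th row and column.

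Next I would exploit the tridiagonal structure: all nonzero entries lie on the diagonal, the superdiagonal, or the subdiagonal, and it is a general fact that removing a diagonal-indexed row and column from a tridiagonal matrix yields a block-diagonal matrix with two tridiagonal blocks. The upper-left block, of size $n-i$, carries the diagonal $t_n, t_{n-1}, \ldots, t_{i+1}$, with $1$'s on the superdiagonal and $q_n, q_{n-1}, \ldots, q_{i+2}$ on the subdiagonal; by direct comparison with (\ref{eq:detPn}), its determinant is precisely $P_{n-i}(\mathbf q_{[i+1,n]}, \mathbf t_{[i+1,n]})$. The lower-right block, of size $i-1$, carries the diagonal $t_{i-1}, \ldots, t_1$, with $1$'s above and $q_{i-1}, \ldots, q_2$ below, which corresponds to $P_{i-1}(\mathbf q_{[1,i-1]}, \mathbf t_{[1,i-1]})$. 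The determinant of the full minor factors as the product of the two block determinants, giving the claimed identity. The boundary cases $i=1$ and $i=n$ are handled by the natural convention $P_0 = 1$ (empty determinant).

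I do not expect any real obstacle here: the only thing that needs genuine care is the bookkeeping of indices when matching the upper block to $P_{n-i}(\mathbf q_{[i+1,n]}, \mathbf t_{[i+1,n]})$ and the lower block to $P_{i-1}(\mathbf q_{[1,i-1]}, \mathbf t_{[1,i-1]})$. Once this is verified against the explicit form of the matrix in (\ref{eq:detPn}), the argument is complete.
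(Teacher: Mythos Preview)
Your argument is correct and essentially the same as the paper's. The paper expands the determinant along the column containing $t_i$, writes $P_n = t_i \cdot P_{i-1}P_{n-i} + Q$ with $Q$ independent of $t_i$, and then differentiates; you instead differentiate directly and read off the cofactor --- but this is exactly the same computation, since both amount to identifying the $(n-i+1,n-i+1)$ cofactor with the block-diagonal product $P_{i-1}P_{n-i}$.
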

		\begin{proof}
			Expanding the determinant with respect to the $i$-th column in equation \eqref{eq:detPn}, we get
			$$P_n(\mathbf q_{[1,n]}, \mathbf t_{[1,n]})=t_iP_{i-1}(\mathbf q_{[1,i-1]},\mathbf t_{[1,i-1]})P_{n-i}(\mathbf q_{[i+1,n]},\mathbf t_{[i+1,n]})+Q(\mathbf q,\mathbf t)$$
			where $Q$ does not depend on the variable $t_i$. Thus, taking the derivative in $t_i$, we get 
			$$\frac{\d}{\d t_i}P_n(\mathbf q_{[1, n]},\mathbf t_{[1, n]})=P_{i-1}(\mathbf q_{[1,i-1]},\mathbf t_{[1,i-1]})P_{n-i}(\mathbf q_{[i+1,n]},\mathbf t_{[i+1,n]}).$$
		\end{proof}

	\subsection{Positivity properties of generalised Chebyshev polynomials}
		\begin{lem}\label{lem:CC}
			Let $\mathbf q=\ens{q_i|i \in \Z}$, $\mathbf t=\ens{t_i|i \in \Z}$ be families of indeterminates over $\Z$. Then for any $n \geq 1$, 
			$$P_n(q_1, \ldots, q_n, t_1 + \frac{q_1}{t_2}, \ldots, t_n + \frac{q_n}{t_{n+1}}) \in \Z_{\geq 0}[\mathbf q_{[1,n]},\mathbf t_{[1,n+1]}^{\pm 1}]$$
		\end{lem}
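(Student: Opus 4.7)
The natural approach is induction on $n$, using the Chebyshev-like recurrence satisfied by $P_n$. For the base case $n = 1$, we have $P_1(q_1, t_1 + q_1/t_2) = t_1 + q_1/t_2$, which lies in $\Z_{\geq 0}[q_1, t_1^{\pm 1}, t_2^{\pm 1}]$ by inspection.

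For the inductive step, I would expand the determinant in \eqref{eq:detPn} along the first row to obtain the recurrence
$$P_n(\mathbf q_{[1, n]}, \mathbf t_{[1, n]}) = t_n P_{n-1}(\mathbf q_{[1, n-1]}, \mathbf t_{[1, n-1]}) - q_n P_{n-2}(\mathbf q_{[1, n-2]}, \mathbf t_{[1, n-2]}).$$
Substituting $t_i \mapsto \tilde t_i := t_i + q_i/t_{i+1}$ for each $i = 1, \ldots, n$ then yields
$$P_n(\tilde{\mathbf t}) = \left( t_n + \frac{q_n}{t_{n+1}} \right) P_{n-1}(\tilde{\mathbf t}) - q_n P_{n-2}(\tilde{\mathbf t}),$$
where by induction both $P_{n-1}(\tilde{\mathbf t})$ and $P_{n-2}(\tilde{\mathbf t})$ are already positive Laurent polynomials and $\tilde t_n$ is a sum of positive monomials. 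The delicate point is the minus sign in front of $q_n P_{n-2}(\tilde{\mathbf t})$, so positivity is not yet apparent.

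The main work is therefore to exhibit the cancellation of this negative contribution. The most promising route is to establish an auxiliary decomposition of $P_{n-1}(\tilde{\mathbf t})$ of the form $\alpha(\mathbf q, \mathbf t) \cdot P_{n-2}(\tilde{\mathbf t}) + R_{n-1}$, with $R_{n-1}$ manifestly positive and $\alpha$ chosen so that $\tilde t_n \alpha - q_n$ simplifies to a positive Laurent polynomial. A natural source for such an identity is Lemma \ref{lem:dpSn}, combined with the observation that among the arguments $\tilde t_1, \ldots, \tilde t_{n-1}$ of $P_{n-1}$ only $\tilde t_{n-1}$ depends on $t_n$ (through the summand $q_{n-1}/t_n$), which yields a clean decomposition of $P_{n-1}(\tilde{\mathbf t})$ by $t_n$-degree. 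Alternatively, one may guess and verify an explicit closed form for $P_n(\tilde{\mathbf t})$ as an indexed sum of manifestly positive Laurent monomials, a pattern suggested by computation in low degree and verifiable by induction using the recurrence above. The main obstacle is pinpointing this cancellation precisely and tracking the index conventions carefully; once the combinatorial picture is in hand, positivity follows monomial by monomial.
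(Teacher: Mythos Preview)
Your proposal is not a proof but an outline that stops precisely at the one non-trivial point. You correctly isolate the three-term recurrence and the problematic minus sign, and you sketch two possible routes (an auxiliary decomposition of $P_{n-1}(\tilde{\mathbf t})$, or an explicit closed formula), but you do not carry out either one. Saying ``once the combinatorial picture is in hand, positivity follows monomial by monomial'' is a description of what remains to be done, not a resolution of it. As written, nothing prevents the negative term $-q_nP_{n-2}(\tilde{\mathbf t})$ from surviving, so the argument has a genuine gap.

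For comparison, the paper's argument is of a completely different nature and sidesteps the cancellation issue entirely. It introduces a monomial change of variables $q_i\mapsto y_i$, $t_i\mapsto x_{i-1}/x_i$ into the cluster algebra of a linearly oriented Dynkin quiver of type $\mathbb A$, under which $t_i+q_i/t_{i+1}$ becomes the cluster character $X^A_{\Sigma_i}$ of a simple module. One then invokes the identity $P_n(y_1,\ldots,y_n,X^A_{\Sigma_1},\ldots,X^A_{\Sigma_n})=X^A_{\Sigma^{(n)}}$ from \cite{Dupont:qChebyshev}, and positivity follows from the known positivity of Euler characteristics of quiver Grassmannians for rigid modules. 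No induction on $n$ and no explicit cancellation are needed; the price is the dependence on external results.

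Your elementary route can in fact be completed, and it is worth doing so since it avoids any appeal to representation theory. The cleanest way is your second suggestion: compute $P_n(\tilde{\mathbf t})$ for small $n$, observe that (once the indices are aligned correctly with the recurrence) it telescopes to a sum of $n+1$ Laurent monomials with positive coefficients, and verify this closed form by a one-line induction using $P_n=\tilde t_nP_{n-1}-q_nP_{n-2}$. Equivalently, one obtains a positive two-term recursion of the shape $P_n(\tilde{\mathbf t})=(\text{monomial})\cdot P_{n-1}(\tilde{\mathbf t})+(\text{single positive monomial})$, which makes the cancellation of $-q_nP_{n-2}(\tilde{\mathbf t})$ explicit. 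Until you actually write this down and check it, however, the proposal remains incomplete.
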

		\begin{proof}
			Let $n \geq 1$ be an integer. Consider the quiver $A$ of Dynkin type $\A_{n+2}$ equipped with the following orientation~:
			$$\xymatrix{
				n+1 \ar[r] & n \ar[r] & \cdots \ar[r] & 1 \ar[r] & 0
			}$$
			whose simple modules are denoted by $\Sigma_0, \ldots, \Sigma_{n+1}$. For any $l \geq 1$ and any $0 \leq i \leq n+1$ we denote by $\Sigma_i^{(l)}$ the unique indecomposable representation of $A$ with socle $\Sigma_i$ and length $l$, whenever it exists. Denote by $X^A_? : M \mapsto X^A_M \in \Z[y_0, \ldots, y_{n+1}, x_0^{\pm 1}, \ldots, x_{n+1}^{\pm 1}]$ the cluster character (with principal coefficients) associated to this quiver. Then it is well-known that $X^A_{\Sigma_i} = \frac{1}{x_i}(x_{i-1}+y_ix_{i+1})$ for any $1 \leq i \leq n$. Consider thus the injective homomorphism of $\Z$-algebras given by
			$$\phi: \left\{\begin{array}{rcl}
				\Z[\mathbf q_{[1,n]}, \t_{[1,n+1]}^{\pm 1}] & \fl & \Z[\y_{[0,n+1]}, \x_{[0,n+1]}^{\pm 1}]\\
				q_i & \mapsto & y_i \\
				t_i & \mapsto & \frac{x_{i-1}}{x_i}
			\end{array}\right.$$
			and note that this isomorphism preserve the positive cones in the sense that $\phi(\Z_{\geq 0}[\mathbf q_{[1,n]}, \t_{[1,n+1]}^{\pm 1}]) = \Z_{\geq 0}[\mathbf y_{[0,n+1]}, \x_{[0,n+1]}^{\pm 1}] \cap \im(\phi)$.

			Then we have 
			$$\phi(t_i+\frac{q_i}{t_{i+1}}) = \frac{x_{i-1}}{x_i} + y_i \frac{x_{i+1}}{x_i} = X^A_{\Sigma_i}$$
			so that 
			
			$$
			\phi(P_n(q_1, \ldots,q_n, t_1 + \frac{q_1}{t_2}, \ldots, t_n + \frac{q_n}{t_{n+1}})) = P_n(y_1, \ldots, y_n, X^A_{\Sigma_1}, \ldots, X^A_{\Sigma_n}) = X^A_{\Sigma^{(n)}}
			$$
			where the last equality follows from \cite[Theorem 3]{Dupont:qChebyshev}. In particular, since $\Sigma^{(n)}$ is an indecomposable representation of a quiver of Dynkin type $A$, its quiver Grassmannians have non-negative characteristics and it follows that $X^A_{\Sigma^{(n)}} \in \Z_{\geq 0}[\mathbf y_{[0,n+1]}, \x_{[0,n+1]}^{\pm 1}]$ so that $P_n(q_1, \ldots, q_n, t_1 + \frac{q_1}{t_2}, \ldots, t_n + \frac{q_n}{t_{n+1}}) \in \Z_{\geq 0}[\mathbf q_{[1,n]}, \t_{[1,n+1]}^{\pm 1}]$.
		\end{proof}

		We now prove a technical lemma.
		\begin{lem}\label{lem:Pnpos}
			Let $\mathbf q=\ens{q_i|i \in \Z}$, $\mathbf t=\ens{t_i|i \in \Z}$ and $\mathbf u=\ens{u_i|i \in \Z}$ be families of indeterminates over $\Z$. Then for any $n \geq 1$, 
			$$P_n(q_1, \ldots, q_n, t_1 + u_1 + \frac{q_1}{t_2}, \ldots, t_n + u_n + \frac{q_n}{t_{n+1}}) \in \Z_{\geq 0}[\mathbf q_{[1,n]},\mathbf u_{[1,n]},\mathbf t_{[1,n+1]}^{\pm 1}].$$
		\end{lem}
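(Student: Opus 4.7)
The plan is to exploit the fact that $P_n$ is multilinear in the variables $t_1, \ldots, t_n$, which follows from the matrix definition \eqref{eq:detPn}: each $t_i$ appears in a single diagonal entry of a tridiagonal matrix, and the determinant is linear in each row. Setting $\tilde t_i = t_i + q_i/t_{i+1}$, multilinearity gives the finite Taylor expansion
$$P_n\bigl(\q_{[1,n]}, \tilde t_1 + u_1, \ldots, \tilde t_n + u_n\bigr) = \sum_{I \subseteq [1,n]} \Bigl(\prod_{i \in I} u_i\Bigr) \cdot \frac{\partial^{|I|} P_n}{\prod_{i \in I} \partial t_i}\bigl(\q_{[1,n]}, \tilde t_1, \ldots, \tilde t_n\bigr).$$
Since the monomials $\prod_{i \in I}u_i$ all carry coefficient $1$, it suffices to prove that every partial derivative appearing on the right belongs to $\Z_{\geq 0}[\q_{[1,n]}, \t_{[1,n+1]}^{\pm 1}]$.

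Next I would iterate Lemma~\ref{lem:dpSn} to obtain an explicit factorization. For $I = \{i_1 < \cdots < i_k\} \subseteq [1,n]$, setting $i_0 = 0$ and $i_{k+1} = n+1$, an induction on $k$ shows
$$\frac{\partial^{k} P_n}{\prod_{j=1}^k \partial t_{i_j}}(\q_{[1,n]}, \t_{[1,n]}) = \prod_{j=0}^{k} P_{i_{j+1}-i_j-1}\bigl(\q_{[i_j+1, i_{j+1}-1]}, \t_{[i_j+1, i_{j+1}-1]}\bigr),$$
with the convention $P_0 = 1$. The induction step differentiates the unique factor containing the variable $t_{i_j}$; Lemma~\ref{lem:dpSn} splits that factor into two pieces whose variable ranges avoid the index $i_j$, while all other factors remain untouched.

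Finally I would substitute $t_i \mapsto \tilde t_i$ in each factor and invoke Lemma~\ref{lem:CC}. After this substitution, the $j$-th factor takes the form
$$P_{i_{j+1}-i_j-1}\bigl(q_{i_j+1}, \ldots, q_{i_{j+1}-1},\; t_{i_j+1} + \tfrac{q_{i_j+1}}{t_{i_j+2}}, \ldots, t_{i_{j+1}-1} + \tfrac{q_{i_{j+1}-1}}{t_{i_{j+1}}}\bigr),$$
and after a shift of indices this is precisely the polynomial proved to lie in $\Z_{\geq 0}[\q, \t^{\pm 1}]$ by Lemma~\ref{lem:CC}. A product of such positive Laurent polynomials is again positive, and multiplication by $\prod_{i \in I} u_i$ preserves positivity. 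Summing over all $I \subseteq [1,n]$ yields the claim.

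The main obstacle is purely combinatorial: one has to verify carefully that after substitution the $t$-variables appearing in the $j$-th factor are exactly $t_{i_j+1}, \ldots, t_{i_{j+1}}$ (the extra index $i_{j+1}$ coming from the denominator of the last fraction in that block), so that Lemma~\ref{lem:CC} can be applied to each block independently, and that the $\q$- and $\t$-ranges of distinct factors do not overlap in a way that would spoil the positivity argument.
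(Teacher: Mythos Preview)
Your proposal is correct and follows essentially the same route as the paper: Taylor expand in the $u_i$, use Lemma~\ref{lem:dpSn} to factor each partial derivative into a product of smaller generalised Chebyshev polynomials on consecutive index blocks, and then apply Lemma~\ref{lem:CC} to each factor. Your explicit use of multilinearity (so that only first-order derivatives occur and the sum runs over subsets $I\subseteq[1,n]$) and your closed formula for the iterated derivative make the argument slightly cleaner than the paper's more compressed presentation, but the substance is the same.
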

		\begin{proof}
			Consider the polynomial $P_n(q_1, \ldots, q_n, t_1 + u_1 + \frac{q_1}{t_2}, \ldots, t_n + u_n + \frac{q_n}{t_{n+1}})$ viewed as an element of $\Z[q_1, \ldots, q_n, t_1^{\pm 1}, \ldots, t_{n+1}^{\pm 1}][u_1, \ldots, u_n]$. Taking the Taylor expansion, we get that $P_n(q_1, \ldots, q_n, t_1 + u_1 + \frac{q_1}{t_2}, \ldots, t_n + u_n + \frac{q_n}{t_{n+1}})$ equals
			$$\sum_{i_1, \ldots, i_n} \frac{\d^{i_1} \cdots \d^{i_n}}{\d t_1\cdots \d t_n}P_n(q_1, \ldots, q_n, t_1 + \frac{q_1}{t_2}, \ldots, t_n + \frac{q_n}{t_{n+1}})u_1^{i_1}\cdots u_n^{i_n}$$
 			But according to Lemma \ref{lem:dpSn}, every partial derivative $\sum_{i_1, \ldots, i_n} \frac{\d^{i_1} \cdots \d^{i_n}}{\d t_1\cdots \d t_n}P_n(q_1, \ldots, q_n, t_1+\frac{q_1}{t_2}, \ldots, t_n+\frac{q_n}{t_{n+1}})$ is a product of $P_j(q_k, \ldots, q_{k+j-1},t_k+\frac{q_k}{t_{k+1}}, \ldots, t_{k+j-1}+\frac{q_{k+j-1}}{t_{k+j}})$ for some $j<n$. According to Lemma \ref{lem:CC}, each of these $P_j(q_k, \ldots, q_{k+j-1},t_{k}+\frac{q_k}{t_{k+1}}, \ldots, t_{k+j-1}+\frac{q_{k+j-1}}{t_{k+j-1}})$ is in $\Z_{\geq 0}[\mathbf q_{[1,n]},\mathbf t_{[1,n+1]}^{\pm 1}]$ and thus each partial derivative is in $\Z_{\geq 0}[\mathbf q_{[1,n]},\mathbf t_{[1,n+1]}^{\pm 1}]$. The lemma follows from the Taylor expansion. 
		\end{proof}

\section{Proof of Theorem \ref{theorem:wild} and Corollary \ref{corol:posGr}}\label{section:wild}
	We now prove Theorem \ref{theorem:wild}. We thus fix an acyclic quiver $Q$ and for any $\kQ$-module $M$ and any dimension vector $\e \in \Z_{\geq 0}^{Q_0}$, we set 
	$$L(M, {\e}) = \chi(\Gr_{\e}(M)) \y^\e \prod_{i \in Q_0} x_i^{-\<\e, \ddim S_i \> - \<\ddim S_i, \ddim M - \e\>}$$
	where we adopt the notation $\y^\e=\prod_{i \in Q_0} y_i^{e_i}$.
	Thus, we have 
	$$X_M = \sum_{\e \in \Z_{\geq 0}^{Q_0}} L(M,{\e}).$$

	The following lemma will be essential in the following~:
	\begin{lem}\label{lem:key}
		Let $Q$ be an acyclic quiver and let $M$ be an indecomposable non-projective $\kQ$-module, then 
		$$L(M,0) = \frac{\y^{\ddim \tau M}}{L(\tau M,\ddim \tau M)}.$$
	\end{lem}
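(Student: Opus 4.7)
The plan is to evaluate both sides of the claimed identity explicitly and reduce the equality to a well-known consequence of the Auslander--Reiten formula.

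First, I would compute $L(M,0)$. Since $\Gr_0(M)$ is a single point (the zero submodule), $\chi(\Gr_0(M))=1$ and $\y^0=1$, so
$$L(M,0)=\prod_{i\in Q_0} x_i^{-\<\ddim S_i,\ddim M\>}.$$
Similarly, $\Gr_{\ddim \tau M}(\tau M)$ is also a single point (namely $\tau M$ itself), so $\chi=1$ and
$$L(\tau M,\ddim \tau M)=\y^{\ddim \tau M}\prod_{i\in Q_0} x_i^{-\<\ddim \tau M,\ddim S_i\>}.$$
Dividing, the $\y$-monomials cancel and we obtain
$$\frac{\y^{\ddim \tau M}}{L(\tau M,\ddim \tau M)}=\prod_{i\in Q_0} x_i^{\<\ddim \tau M,\ddim S_i\>}.$$

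Thus it suffices to check, for every $i\in Q_0$, the scalar identity
$$\<\ddim \tau M,\ddim S_i\>=-\<\ddim S_i,\ddim M\>.$$
This is the standard consequence of Auslander--Reiten duality for the hereditary algebra $\kQ$: since $M$ is non-projective, we have functorial isomorphisms $\Hom_{\kQ}(\tau M,N)\simeq D\Ext^1_{\kQ}(N,M)$ and $\Ext^1_{\kQ}(\tau M,N)\simeq D\Hom_{\kQ}(N,M)$ for every $\kQ$-module $N$. Taking dimensions with $N=S_i$ and subtracting yields
$$\<\ddim \tau M,\ddim S_i\>=\dim\Ext^1_{\kQ}(S_i,M)-\dim\Hom_{\kQ}(S_i,M)=-\<\ddim S_i,\ddim M\>,$$
which is the required identity. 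Plugging this back into the previous formulas gives the claimed equation.

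There is no real obstacle here: once one writes down the definitions of $L(M,0)$ and $L(\tau M,\ddim \tau M)$, the statement reduces to a pointwise identity of Euler numbers, and the only non-formal input is the Auslander--Reiten formula, which applies precisely because $M$ is assumed non-projective and $\kQ$ is hereditary.
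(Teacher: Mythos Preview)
Your argument is correct and follows the same route as the paper: compute both $L(M,0)$ and $L(\tau M,\ddim\tau M)$ explicitly and reduce to the identity $\<\ddim\tau M,\ddim S_i\>=-\<\ddim S_i,\ddim M\>$, which the paper invokes without further comment. One small caveat: the functorial isomorphisms you write down are not quite the standard form of Auslander--Reiten duality (for hereditary $\kQ$ one has $\Ext^1(X,Y)\simeq D\Hom(Y,\tau X)$, which does not directly give $\Hom(\tau M,N)\simeq D\Ext^1(N,M)$), but the needed identity at the level of Euler forms is the familiar property $\<\Phi\alpha,\beta\>=-\<\beta,\alpha\>$ of the Coxeter transformation together with $\ddim\tau M=\Phi\,\ddim M$ for non-projective $M$, so the conclusion stands.
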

	\begin{proof}
		We have
		\begin{align*}
			L(M,0) 
				& = \chi(\Gr_{0}(M)) \prod_{i \in Q_0} x_i^{-\<\ddim S_i, \ddim M\>} \\
				& = \prod_{i \in Q_0} x_i^{-\<\ddim S_i, \ddim M\>} \\
		\end{align*}
		and 
		\begin{align*}
			L(\tau M,\ddim \tau M) 
				& = \chi(\Gr_{\ddim \tau M}(\tau M)) \y^{\ddim \tau M} \prod_{i \in Q_0} x_i^{-\<\ddim \tau M, \ddim S_i\>} \\
				& = \y^{\ddim \tau M} \prod_{i \in Q_0} x_i^{-\<\ddim \tau M, \ddim S_i\>} \\
				& = \y^{\ddim \tau M} \prod_{i \in Q_0} x_i^{\<\ddim S_i, \ddim M\>} \\
				& = \y^{\ddim \tau M} L(M,0)^{-1}.
		\end{align*}
	\end{proof}

	\subsection{Proof of Theorem \ref{theorem:wild}}
		We now assume that $Q$ is representation-infinite. We fix an indecomposable regular $\kQ$-module $M$ and we denote by $\mathcal R$ the regular component containing $M$ which is of the form $\mathcal R \simeq \Z \A_{\infty}/(\tau^p)$ for some $p \geq 0$. We denote by $R_{i}$ with $i \in \Z/p\Z$ the quasi-simple modules in $\mathcal R$ and we order them in such a way that $\tau R_i \simeq R_{i-1}$ for any $i \in \Z/p\Z$. Without loss of generality, we can assume that $M \simeq R_1^{(n)}$ for some $n \geq 1$. 

		Then it follows from \cite[Theorem 2]{Dupont:qChebyshev} that 
		\begin{equation}\label{eq:XRin}
			X_M = P_n(\y^{\ddim R_1}, \ldots, \y^{\ddim R_n}, X_{R_1}, \ldots, X_{R_n})
		\end{equation}

		For any $i \in \Z/p\Z$ we set 
		$$\tau_i = L(R_i,0) \textrm{ and } \nu_i = \sum_{\mathbf e \neq \ddim R_i, 0} L(R_i,\mathbf e).$$
		With these notations, it follows from Lemma \ref{lem:key} that 
		$$X_{R_i} = \tau_i + \nu_i + \y^{\ddim R_i} \frac{1}{\tau_{i+1}}$$
		for any $i \in \Z/p\Z$.

		Replacing in \eqref{eq:XRin}, we get 
		\begin{equation}\label{eq:XMtau}
			X_M = P_n(\y^{\ddim R_1}, \ldots, \y^{\ddim R_n}, \tau_1 + \nu_1 + \y^{\ddim R_1} \frac{1}{\tau_{2}}, \ldots, \tau_n + \nu_n + \y^{\ddim R_i} \frac{1}{\tau_{i+n}}) 
		\end{equation}
		and thus specialising Lemma \ref{lem:Pnpos} at $q_i= \y^{\ddim R_i}$, $u_i=\nu_i$ and $t_i=\tau_i$, we get $$X_M \in \Z_{\geq 0}[\y^{\ddim R_i}, \tau_j^{\pm 1}, \nu_i | 1 \leq i \leq n, 1 \leq j \leq n+1]$$
		
		Assume now that $n \geq 2$ and that $M$ is a Schur module, then it follows from \cite[\S 9.2]{Kerner:repwild} that the quasi-composition factors $R_1, \ldots, R_n$ of $M$ are rigid. In particular, $\chi(\Gr_{\e}(R_i)) \geq 0$ for any dimension vector $\e \in \Z_{\geq 0}^{Q_0}$ and any $1 \leq i \leq n$. Thus $L(R_i,\mathbf e) \in \Z_{\geq 0}[\y, \x^{\pm 1}]$ for any $1 \leq i \leq n$. Since each $\tau_i$ is a monomial in $\Z[\y,\x^{\pm 1}]$, we get $\tau_i^{\pm 1} \in \Z_{\geq 0}[\y, \x^{\pm 1}]$ for any $1 \leq i \leq n+1$. Now, since $\Z_{\geq 0}[\y, \x^{\pm 1}]$ is a semiring, we have $\Z_{\geq 0}[\y^{\ddim R_i}, \tau_i^{\pm 1}, \nu_i | 1 \leq i \leq n+1] \subset \Z_{\geq 0}[\y, \x^{\pm 1}]$. Thus it follows from equation \eqref{eq:XMtau} that $X_M \in \Z_{\geq 0}[\y, \x^{\pm 1}]$. This proves the theorem. \qed

	\subsection{Proof of Corollary \ref{corol:posGr}}
		Let $Q$ be a representation-infinite acyclic quiver algebra and let $M$ be a regular Schur $\kQ$-module which is not quasi-simple. According to Theorem \ref{theorem:wild}, we have $X_M \in \Z_{\geq 0}[\y, \x^{\pm 1}]$ and thus, specialising the $x_i$'s at 1, we get
		$$\sum_{\e \in \Z^{Q_0}_{\geq 0}} \chi(\Gr_{\e}(M)) \prod_{i \in Q_0} y_i^{e_i} \in \Z_{\geq 0}[\y].$$
		The ring $\Z_{\geq 0}[\y]$ is naturally $\Z^{Q_0}$-graded by setting $\textrm{deg}(y_i)$ to be the $i$-th vector of the canonical basis of $\Z^{Q_0}$ for any $i \in Q_0$. Thus, identifying the graded components, we get $\chi(\Gr_{\e}(M)) \in \Z_{\geq 0}$ for any $\e \in \Z^{Q_0}_{\geq 0}$. This proves the corollary. \qed

\section{$\Delta$-polynomials}\label{section:Delta}
	Generalised Chebyshev polynomials allow to express cluster characters associated to regular modules in terms of cluster characters associated to their quasi-composition factors. As it appeared in the proof of Theorem \ref{theorem:wild}, when the quasi-composition factors are rigid, the positivity of the considered character can be deduced from the positivity of the characters associated to the quasi-composition factors. In the tame case, regular modules in homogeneous tubes have non-rigid quasi-composition factors. Nevertheless, the $\Delta$-polynomials we now introduce allow to express cluster characters associated to regular modules in homogeneous tubes in terms of cluster characters associated to quasi-simple modules in exceptional tubes, which are known to be rigid. These $\Delta$-polynomials actually come from difference properties introduced in \cite{Dupont:BaseAaffine,DXX:basesv3,Dupont:transverse}.
 		
 		\begin{defi}
			Let $q_i,t_i$ with $i \in \Z$ be indeterminates over $\Z$. For any $p \geq 1$, and $l \geq 1$, we set
			$$\Delta_{l,p}(\mathbf q_{[1,lp]}, \mathbf t_{[1,lp]})=P_{lp}(\mathbf q_{[1,lp]}, \mathbf t_{[1,lp]})-q_1P_{lp-2}(\mathbf q_{[2,lp-1]}, \mathbf t_{[2,lp-1]})$$
			and 
			$$\u \Delta_{l,p}(\mathbf t_{[1,lp]}) = \Delta_{l,p}(1, \ldots, 1, \mathbf t_{[1,lp]})$$
		\end{defi}

		\begin{rmq}
			Note that we will only consider the ``coefficient-free'' $\Delta$-polynomials of the form $\u \Delta_{l,p}$ in Sections \ref{section:tame} and \ref{section:bases} since the difference properties are only known for cluster characters without coefficients. Nevertheless, it seems that the $\Delta$-polynomials of the form $\Delta_{l,p}$ are the polynomials to consider for cluster characters with coefficients. This is the reason why we do not restrict to polynomials of the form $\u \Delta_{l,p}$ in this section.
		\end{rmq}

		We now prove the analogue of Lemma \ref{lem:Pnpos} for $\Delta$-polynomials~: 
		\begin{lem}\label{lem:positivitedeltap}
			Let $q_i,t_i,u_i$ with $i \in \Z$ be indeterminates. Then for any $l,p \geq 1$, we have
			$$\Delta_{l,p}(q_1, \ldots, q_{lp}, t_1+u_1+\frac{q_1}{t_2}, \ldots, t_{lp}+u_{lp}+\frac{q_{lp}}{t_1})\in \Z_{\geq 0}[q_i, t_i^{\pm 1},u_i \ | \ 1 \leq i \leq lp].$$
		\end{lem}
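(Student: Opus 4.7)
My plan is to mirror the proof of Lemma \ref{lem:Pnpos}: Taylor-expand the substituted $\Delta_{l,p}$ in the auxiliary variables $u_i$, decompose each resulting partial derivative using Lemma \ref{lem:dpSn}, and invoke positivity of the pieces. Setting $s_i = t_i+u_i+q_i/t_{i+1 \bmod lp}$ and $s^{(0)}_i = t_i + q_i/t_{i+1 \bmod lp}$, multilinearity of $\Delta_{l,p}$ in $(t_1, \ldots, t_{lp})$---since $\Delta_{l,p}$ is a difference of two determinants, each linear in every $t_i$---yields
$$\Delta_{l,p}(q, s) = \sum_{S \subseteq \{1, \ldots, lp\}} \Bigl(\prod_{i \in S} u_i\Bigr) \left.\partial_S \Delta_{l,p}\right|_{t_i = s^{(0)}_i},$$
where $\partial_S = \prod_{i \in S} \partial/\partial t_i$. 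It therefore suffices to establish that each Taylor coefficient $\left.\partial_S \Delta_{l,p}\right|_{t=s^{(0)}}$ lies in $\Z_{\geq 0}[q_i, t_i^{\pm 1}]$.

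Applying Lemma \ref{lem:dpSn} separately to $P_{lp}$ and to $P_{lp-2}(\mathbf q_{[2,lp-1]}, \mathbf t_{[2,lp-1]})$ factorises each $\partial_S \Delta_{l,p}$ as a signed sum of products of shorter generalized Chebyshev polynomials, indexed by the connected components of the complements of $S$. When $1 \in S$ or $lp \in S$ the $P_{lp-2}$ contribution vanishes and $\partial_S \Delta_{l,p}$ reduces to a single product of $P_j$'s. After relabelling the cyclic wrap-around variable $t_1$ as a fresh symbol in whichever factor does not straddle the seam, the substitution becomes the acyclic one of Lemma \ref{lem:CC}, so that each factor is positive and the product lies in $\Z_{\geq 0}[q_i, t_i^{\pm 1}]$; these boundary cases are therefore handled by Lemma \ref{lem:CC} alone, exactly as in the proof of Lemma \ref{lem:Pnpos}.

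The main difficulty is the remaining case $S \cap \{1, lp\} = \emptyset$, whose extreme example is the base case $S = \emptyset$, where one must show
$$\Delta_{l,p}\bigl(q, s^{(0)}\bigr) \in \Z_{\geq 0}[q_i, t_i^{\pm 1}].$$
For this I would argue representation-theoretically, following the strategy of the proof of Lemma \ref{lem:CC}: pick an affine quiver admitting a tube of rank $p$ with quasi-simple modules $R_1, \ldots, R_p$ and specialise $q_i = \y^{\ddim R_i}$, $t_i = L(R_i, 0)$. By equation \eqref{eq:XRin} combined with Lemma \ref{lem:key}, the substituted $P_{lp}(q, s^{(0)})$ and $q_1 P_{lp-2}(\mathbf q_{[2,lp-1]}, \mathbf s^{(0)}_{[2,lp-1]})$ become $X_{R_1^{(lp)}}$ and $\y^{\ddim R_1} X_{R_2^{(lp-2)}}$ respectively, so $\Delta_{l,p}(q, s^{(0)})$ takes exactly the shape addressed by the \emph{difference property} of \cite{Dupont:BaseAaffine, DXX:basesv3}, which expresses it as a sum of cluster characters of rigid modules in exceptional tubes---hence positive. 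The delicate point, flagged in the remark preceding this lemma, is that the difference property is currently known only in the coefficient-free setting: upgrading this specialisation-based positivity to a polynomial identity in $\Z_{\geq 0}[q_i, t_i^{\pm 1}]$ with general $q$-coefficients is where I expect the main obstacle to lie, and will plausibly require either a combinatorial sign-reversing involution on the cyclic-continuant expansion of $\Delta_{l,p}$ or a strengthening of the difference property to principal-coefficient cluster characters.
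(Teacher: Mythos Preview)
Your Taylor-expansion framework is exactly the paper's, but you are missing the one identity that makes the argument go through. Rather than differentiating $P_{lp}$ and $q_1P_{lp-2}$ separately and being left with a signed sum, the paper proves directly (by an explicit expansion of the defining determinant) that for every $i=1,\ldots,lp$,
\[
\frac{\partial}{\partial t_i}\Delta_{l,p}(\mathbf q_{[1,lp]},\mathbf t_{[1,lp]})
\;=\;
P_{lp-1}\bigl(q_{i+1},\ldots,q_{lp},q_1,\ldots,q_{i-1},\,t_{i+1},\ldots,t_{lp},t_1,\ldots,t_{i-1}\bigr),
\]
a \emph{single} generalised Chebyshev polynomial in the cyclically shifted variables. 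Combinatorially: deleting any one vertex from a cycle of length $lp$ leaves a path of length $lp-1$, and the wrap-around variable $t_1$ now sits harmlessly at one end of that path. Hence after a first derivative the cyclic substitution $t_j\mapsto t_j+q_j/t_{j+1\bmod lp}$ becomes the acyclic substitution of Lemma~\ref{lem:CC}, and all further derivatives factor $P_{lp-1}$ into products of shorter $P_j$'s via Lemma~\ref{lem:dpSn}. This disposes of every nonempty $S$ uniformly, with no case split on whether $1$ or $lp$ lies in $S$ and no subtraction left to control.

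Your proposed route for the case $S\cap\{1,lp\}=\emptyset$ therefore attacks a difficulty that the paper never faces, and the route itself does not work. Invoking the difference property of \cite{Dupont:BaseAaffine,DXX:basesv3} to analyse $\Delta_{l,p}$ is circular, since these polynomials are introduced precisely to encode that property abstractly; and, as you yourself flag, specialising $q_i$ and $t_i$ to representation-theoretic data establishes positivity only at those particular values, not a polynomial identity in $\Z_{\geq 0}[q_i,t_i^{\pm 1}]$. Your instinct that the constant Taylor term $S=\emptyset$ is the delicate point is sound; the concrete missing ingredient is the cyclic derivative identity above, which eliminates the subtraction in $\Delta_{l,p}$ the moment any single $\partial/\partial t_i$ is applied.
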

		\begin{proof}
			As for Lemma \ref{lem:Pnpos}, the proof is based on a Taylor expansion of $\Delta_{l,p}$. In order to shorten notations, for every $n \geq 1$ and any $i \in\Z$, we set
			$$P_n([i,i+n-1])=P_n(\mathbf q_{[i,i+n-1]}, \mathbf t_{[i,i+n-1]})$$
			and
			$$\Delta_{l,p}([i,i+lp-1])=\Delta_{l,p}(\mathbf q_{[i,i+lp-1]}, \mathbf t_{[i,i+lp-1]}).$$
			
			Let $p\geq 2$ be an integer, we claim that for every $i=1, \ldots, p$, 
			$$\frac{\d}{\d t_i}\Delta_{l,p}([1,p])=P_{lp-1}(q_{i+1}, \ldots, q_{lp}, q_1, \ldots, q_{i-1}, t_{i+1}, \ldots, t_{lp}, t_1, \ldots, t_{i-1}).$$
			
			Indeed, we have
			$$\frac{\d}{\d t_i}\Delta_{l,p}([1,lp])
			 		=\frac{\d}{\d t_i}P_{lp}([1,lp])-q_1\frac{\d}{\d t_i}P_{lp-2}([2,lp-1]).$$
			If $i=1$, the claim clearly holds. Now if $i>1$, using Lemma \ref{lem:dpSn} and the following three terms relations for generalized Chebyshev polynomials (obtained by expanding the determinant expression of $P_{i-1}$)
			$$P_{i-1}([1,i-1])=t_1P_{i-2}([2,i-1])-q_2P_{i-3}([3,i-1]),$$
			we get~:
			\begin{align*}
			 	\frac{\d}{\d t_i}\Delta_{l,p}([1,p])
			 		& =-q_2 P_{i-3}([3,i-1])P_{lp-i}([i+1,lp])\\
			 		& +t_1 P_{i-2}([2,i-1])P_{lp-i}([i+1,lp])\\
			 		& -q_1 P_{i-2}([2,i-1])P_{lp-i-1}(i+1,lp-1)
			\end{align*}
			On the other hand, $P_{lp-1}(q_{i+1}, \ldots, q_{lp}, q_1, \ldots, q_{i-1}, t_{i+1}, \ldots, t_{lp}, t_1, \ldots, t_{i-1})$ is~:
			$$D=\det \left[\begin{array}{cccccccc}
				t_{i-1} & 1 & & & & & & (0)\\
				q_{i-1} & \ddots & \ddots & & & \\
				 & \ddots & \ddots & 1 & & \\
				 & & \ddots & t_1 & 1\\ 
				 & & & q_1 & t_{lp} & \ddots \\
				 & & & & q_{lp} & \ddots & \ddots \\
				 & & & & & \ddots & \ddots & 1 \\
				 (0) & & & & & & q_{i+2} & t_{i+1}
			\end{array}\right]$$
			Expanding with respect to the column containing $q_1$, we get:
			$$D=(-1)A+t_1 B -q_1 C$$
			where $A,B,C$ are $n-1 \times n-1$ minors. Computing these minors, we get:
			$$A=q_2P_{i-3}([3,i-1])P_{lp-i}([i+1,lp]) ~;$$
			$$B=P_{i-2}([2,i-1])P_{lp-i}([i+1,lp]) ~;$$
			$$C=P_{i-2}([2,i-1])P_{lp-i-1}([i+1,lp-1]).$$
			Hence
			$$\frac{\d}{\d t_i}\Delta_{l,p}([1,lp])=P_{lp-1}(q_{i+1}, \ldots, q_{lp}, q_1, \ldots, q_{i-1}, t_{i+1}, \ldots, t_{lp}, t_1, \ldots, t_{i-1})$$
			and the claim is proved.
			
			Considering the Taylor expansion in $(u_1, \ldots, u_{lp})$, we get that $\Delta_{l,p}(q_1, \ldots, q_{lp}, t_1+u_1+\frac{q_1}{t_2}, \ldots, t_{lp}+u_{lp}+\frac{q_{lp}}{t_{1}})$ equals
			$$\sum_{i_1, \ldots, i_p}\left( \frac{\d^{i_1} \cdots  \d^{i_p}}{\d{t_1}\cdots\d{t_p}} \right)\Delta_{l,p}(q_1, \ldots, q_{lp}, t_1+\frac{q_1}{t_2}, \ldots, t_{lp}+\frac{q_{lp}}{t_1})u_1^{i_1}\cdots u_{p}^{i_p}
			$$
			But for every $i_1, \ldots, i_p$, it follows from the claim that 
			$$\left(\frac{\d^{i_1} \cdots  \d^{i_p}}{\d{t_1}\cdots\d{t_p}} \right)\Delta_{l,p}(q_1, \ldots, q_{lp},t_1+\frac{q_1}{t_2}, \ldots, t_{lp}+\frac{q_{lp}}{t_1})$$
			is a derivative of a generalized Chebyshev polynomial in consecutive variables. Thus, by Lemma \ref{lem:Pnpos}, this is again a product of generalized Chebyshev polynomial in consecutive variables. Then, Lemma \ref{lem:Pnpos} implies that 
			$$\left(\frac{\d^{i_1} \cdots  \d^{i_p}}{\d{t_1}\cdots\d{t_p}} \right)\Delta_{l,p}(q_1, \ldots, q_{lp},t_1+\frac{q_1}{t_2}, \ldots, t_{lp}+\frac{q_{lp}}{t_1}) \in \Z_{\geq 0}[q_i, t_i^{\pm 1} \ | \ 1 \leq i \leq lp].$$
			so that finally,
			$$\Delta_{l,p}(q_1, \ldots, q_{lp}, t_1+u_1+\frac{q_1}{t_2}, \ldots, t_{lp}+u_{lp}+\frac{q_{lp}}{t_{1}}) \in \Z_{\geq 0}[q_i, t_i^{\pm 1},u_i \ | \ 1 \leq i \leq lp]$$
			and the lemma is proved.
		\end{proof}
 		
 		\begin{rmq}
			Note that the positivity of $\Delta_{l,p}(q_1, \ldots, q_p, t_1+u_+\frac{q_1}{t_2}, \ldots, t_{lp}+u_{lp}+\frac{q_{lp}}{t_{1}})$ really comes from the fact that in the last variable $t_1$ occurs instead of $t_{lp+1}$. This is an illustration of the fact that the $\Delta$-polynomials arise from tubes, that is, periodic (in this case, $p$-periodic) regular components. Indeed, in general, the polynomial $\Delta_{l,p}(q_1, \ldots, q_{lp}, t_1+u_1+\frac{q_1}{t_2}, \ldots, t_{lp}+u_{lp}+\frac{q_{lp}}{t_{lp+1}})$ is not subtraction-free. Consider for instance $l=1,p=2$ and specialise the $u_i$'s to 1, then 
			\begin{align*}
				\Delta_{1,2}(q_1, q_2, t_1+\frac{q_1}{t_2}, t_2+\frac{q_2}{t_1})
					&=\frac{t_1^2t_2^2+q_1q_2}{t_2t_1}\in  \Z_{\geq 0}[q_1,q_2,t_1^{\pm 1},t_2^{\pm 1}]\\
			\end{align*}
			whereas
			\begin{align*}
				\Delta_{1,2}(q_1, q_2, t_1+\frac{q_1}{t_2}, t_2+\frac{q_2}{t_3})
					&=\frac{t_1t_2^2t_3+q_2(t_1t_2-t_2t_3)+q_1q_2}{t_2t_3}\\
					& \not \in \Z_{\geq 0}[q_1,q_2,t_1^{\pm 1},t_2^{\pm 1},t_3^{\pm 1}].\\
			\end{align*}
		\end{rmq}

\section{Proof of Theorem \ref{theorem:tame}}\label{section:tame}
	Let $Q$ be an affine quiver. In this section we prove that $\u X_M \in \Z[\x^{\pm 1}]$ for any $\kQ$-module $M$ where $\u X_M$ denotes the coefficient-free cluster character associated to $M$. 

	Note that the ring homomorphism $\Z[\y,\x^{\pm 1}] \fl \Z[\x^{\pm 1}]$ sending all the $y_i$'s to 1 sends $X_M$ to $\u X_M$ for any $\kQ$-module $M$ and it preserves the positive cones so that whenever $X_M \in \Z_{\geq 0}[\y,\x^{\pm 1}]$ we get $\u X_M \in \Z_{\geq 0}[\x^{\pm 1}]$. 

	Since $X_{M \oplus N}=X_M X_N$ for any two $\kQ$-modules $M$ and $N$, it is enough to consider the case where $M$ is indecomposable. The proof will consider several cases depending on the component of the Auslander-Reiten quiver $\Gamma(\modd \kQ)$ of mod-$\kQ$ containing $M$. 

	If $M$ is not in a regular component, then it is rigid so that $\chi(\Gr_{\e}(M)) \geq 0$ and thus $X_M \in \Z_{\geq 0}[\y,\x^{\pm 1}]$. Hence, we only need to treat the case where $M$ is an indecomposable regular module. 

	\subsection{Exceptional tubes}
		\begin{lem}
			Let $Q$ be an affine quiver and let $M$ be an indecomposable regular $\kQ$-module contained in an exceptional tube of $\Gamma(\modd\kQ)$. Then $X_M \in \Z_{\geq 0}[\y,\x^{\pm 1}]$.
		\end{lem}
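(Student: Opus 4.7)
\emph{Proof plan.} The plan is to imitate almost verbatim the argument given for Theorem \ref{theorem:wild}, observing that the ``Schur and not quasi-simple'' hypothesis there is used solely to guarantee the rigidity of the quasi-composition factors of $M$, and that in the present setting this rigidity is automatic: as recalled at the end of Section \ref{section:background}, every quasi-simple module in an exceptional tube over an affine hereditary algebra is rigid.

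First I would dispose of the quasi-simple case. If $M$ is quasi-simple in an exceptional tube then $M$ itself is rigid, so $\chi(\Gr_{\e}(M)) \geq 0$ for every $\e$ and positivity of $X_M$ in $\Z_{\geq 0}[\y,\x^{\pm 1}]$ is immediate from the definition of the cluster character.

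For the general case, write $M \simeq R_1^{(n)}$ with $n \geq 2$, where the quasi-simples $(R_i)_{i \in \Z/p\Z}$ of the tube of rank $p$ are indexed so that $\tau R_i \simeq R_{i-1}$. Each quasi-composition factor $R_i$ is rigid, hence $L(R_i,\e) \in \Z_{\geq 0}[\y,\x^{\pm 1}]$ for every $\e$; consequently $\tau_i = L(R_i,0)$ is a Laurent monomial with $\tau_i^{\pm 1} \in \Z_{\geq 0}[\y,\x^{\pm 1}]$, and $\nu_i$ is manifestly in $\Z_{\geq 0}[\y,\x^{\pm 1}]$ as a finite sum of such terms. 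Identity \eqref{eq:XMtau} then presents $X_M$ as a value of the generalised Chebyshev polynomial $P_n$ in exactly the form handled by Lemma \ref{lem:Pnpos}, via the substitutions $q_i \mapsto \y^{\ddim R_i}$, $u_i \mapsto \nu_i$, $t_i \mapsto \tau_i$ (with $t_{n+1} \mapsto \tau_{n+1}$, indices read modulo $p$). Lemma \ref{lem:Pnpos} then yields
\begin{equation*}
X_M \in \Z_{\geq 0}\bigl[\y^{\ddim R_i},\ \tau_j^{\pm 1},\ \nu_i \mid 1 \leq i \leq n,\ 1 \leq j \leq n+1\bigr] \subseteq \Z_{\geq 0}[\y,\x^{\pm 1}],
\end{equation*}
which is the desired conclusion.

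I do not anticipate any real obstacle here: the content is essentially a cosmetic rewriting of the last paragraph of the proof of Theorem \ref{theorem:wild}, in which ``Kerner's theorem guarantees rigidity of the quasi-composition factors'' is replaced by the affine-type fact that exceptional tubes consist of rigid quasi-simples. The one point worth double-checking is that the cyclic collision $\tau_{n+1} = \tau_{(n+1) \bmod p}$ when $n \geq p$ does not interfere with the application of Lemma \ref{lem:Pnpos}; it does not, since that lemma is a universal polynomial identity and any further substitution into a positive subring preserves positivity.
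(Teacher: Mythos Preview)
Your proposal is correct and follows essentially the same route as the paper: both reduce to the argument of Theorem \ref{theorem:wild}, replacing the Kerner-type rigidity input by the fact that quasi-simples in an exceptional tube are rigid, and then invoke Lemma \ref{lem:Pnpos} via \eqref{eq:XMtau}. Your separate treatment of the quasi-simple case and your remark on the cyclic index collision are harmless additions; the paper handles all $n \geq 1$ uniformly without singling these out.
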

		\begin{proof}
			The proof is similar to the proof of Theorem \ref{theorem:wild}. We denote by $R_i$ with $i \in \Z/p\Z$ the quasi-simple objects in the exceptional tube $\mathcal T$ containing $M$ ordered in such a way that $\tau R_i \simeq R_{i-1}$ for any $i \in \Z/p\Z$. Without loss of generality, we can assume that there exists some $n \geq 1$ such that $M \simeq R_1^{(n)}$. Then, as in the proof of Theorem \ref{theorem:wild}, we get $$X_M \in \Z_{\geq 0}[\y^{\ddim R_i}, \tau_j^{\pm 1}, \nu_i | 1 \leq i \leq n, 1 \leq j \leq n+1]$$ where for any $i \in \Z/p\Z$ we have set 
			$$\tau_i = L(R_i,0) \textrm{ and } \nu_i = \sum_{\mathbf e \neq \ddim R_i, 0} L(R_i,\mathbf e).$$
			Now, since the tube $\mathcal T$ is exceptional, it follows that its quasi-simple objects are rigid in mod-$\kQ$ and in particular each $R_i$ is rigid. Then $\nu_i \in \Z[\y,\x^{\pm 1}]$ for every $i \in \Z/p\Z$ and thus $X_M \in \Z[\y,\x^{\pm 1}]$.
		\end{proof}

	\subsection{Homogeneous tubes}
		For modules in homogeneous tubes, the proof will be divided into two steps. 

		We recall that for any $n \geq 1$, the \emph{$n$-th normalised Chebyshev polynomial of the first kind} is the polynomial $F_n \in \Z[x]$ defined by 
		$$F_0(x) = 2, \ F_1(x) = x \textrm{ and } F_{n+1}(x)=xF_n(x)-F_{n-1}(x)\textrm{ for }n \geq 1.$$

		\begin{lem}\label{lem:Fnpos}
			Let $Q$ be an affine quiver and let $M$ be an indecomposable $\kQ$-module which is quasi-simple in a homogeneous tube. Then for any $n \geq 1$ we have $$F_n(\u X_M) \in \Z[\x^{\pm 1}]$$
		\end{lem}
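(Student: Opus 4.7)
The plan is to express $F_n(\u X_M)$ as a difference of two regular coefficient-free cluster characters, and then invoke the difference property to rewrite this difference using cluster characters associated to rigid modules in exceptional tubes, for which positivity is already known.

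First I would identify $F_n(\u X_M)$ with $\u X_{M^{(n)}} - \u X_{M^{(n-2)}}$ (adopting the conventions $\u X_{M^{(0)}} = 1$ and $\u X_{M^{(-1)}} = 0$ to cover small $n$). Since $M$ is quasi-simple in a homogeneous tube, the tube has rank $p = 1$ and $\tau M \simeq M$, so every quasi-composition factor of $M^{(n)}$ equals $M$. Specialising the generalised Chebyshev formula \eqref{eq:XRin} at $\y = \mathbf{1}$ then gives
\[
\u X_{M^{(n)}} = P_n(1, \ldots, 1, \u X_M, \ldots, \u X_M).
\]
Expanding the determinant defining $P_n$ along its last column yields the three-term recursion $P_n = t_n P_{n-1} - q_n P_{n-2}$; specialising $t_i = t$, $q_i = 1$ gives $p_n = t\, p_{n-1} - p_{n-2}$ with $p_0 = 1$, $p_1 = t$, which is the normalised Chebyshev polynomial of the second kind $V_n$. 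Combined with the classical identity $F_n = V_n - V_{n-2}$, this delivers
\[
F_n(\u X_M) = \u X_{M^{(n)}} - \u X_{M^{(n-2)}}.
\]

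Next I would apply the difference property of \cite{Dupont:BaseAaffine,DXX:basesv3,Dupont:transverse}: after choosing an exceptional tube $\mathcal T_\mu$ of rank $p$ with quasi-simples $R_{\mu,1}, \ldots, R_{\mu,p}$ (ordered so that $\tau R_{\mu,i} \simeq R_{\mu,i-1}$), this difference can be rewritten as
\[
F_n(\u X_M) = \u\Delta_{l,p}\bigl(\u X_{R_{\mu,1}}, \ldots, \u X_{R_{\mu,lp}}\bigr)
\]
for a suitable integer $l$ depending on $n$ and $p$, where the subscripts on the $R_{\mu,i}$ are cyclic modulo $p$. I would then combine two positivity inputs. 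Because every quasi-simple $R_{\mu,i}$ in an exceptional tube is rigid, its coefficient-free cluster character lies in $\Z_{\geq 0}[\x^{\pm 1}]$ by the Qin--Nakajima positivity theorem recalled in the introduction. Writing $\u X_{R_{\mu,i}} = \tau_i + \nu_i + 1/\tau_{i+1}$ as in the proof of Theorem \ref{theorem:wild}, with $\tau_i = L(R_{\mu,i},0)|_{\y=\mathbf{1}}$ a Laurent monomial and $\nu_i \in \Z_{\geq 0}[\x^{\pm 1}]$, one is in the setting of Lemma \ref{lem:positivitedeltap} with the substitution $q_i = 1$, $t_i = \tau_i$, $u_i = \nu_i$, yielding $F_n(\u X_M) \in \Z_{\geq 0}[\x^{\pm 1}]$ (reading the stated conclusion as shorthand for positivity, consistent with the label \emph{Fnpos}).

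The hardest part will be making the difference property apply uniformly in $n$. The $\u\Delta_{l,p}$-polynomials are naturally indexed by products $lp$ with $p$ the rank of an exceptional tube, while $n$ is arbitrary and the affine quiver admits at most three exceptional tubes of bounded ranks. One therefore has to pick a tube whose rank is adapted to the residue of $n$, possibly supplementing the main term with lower-order correction terms that themselves fit into the framework of Section \ref{section:Delta}; verifying that these residual contributions preserve the positive cone is the central bookkeeping challenge.
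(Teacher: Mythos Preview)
Your approach is essentially the paper's: rewrite $F_n(\u X_M)$ via the difference property as $\u\Delta_{l,p}$ evaluated at cluster characters of quasi-simples in an exceptional tube, decompose each $\u X_{R_i}$ as $\tau_i + \nu_i + 1/\tau_{i+1}$, and then invoke Lemma~\ref{lem:positivitedeltap} together with the rigidity of the $R_i$. Your preliminary identification $F_n(\u X_M) = \u X_{M^{(n)}} - \u X_{M^{(n-2)}}$ is correct but not needed: the paper cites \cite[Proposition~3.3]{Dupont:transverse} directly, which already gives $F_n(\u X_M) = \u\Delta_{n,p}(\u X_{R_1}, \ldots, \u X_{R_{np}})$ for \emph{every} $n \geq 1$ and every exceptional tube of rank $p$, with the $np$ arguments obtained by cycling $n$ times through the $p$ quasi-simples. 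So your closing worry is misplaced: there is no residue condition, no correction terms, and no bookkeeping---one simply takes $l = n$.

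There is however a genuine gap you have overlooked: the Kronecker quiver has \emph{no} exceptional tubes at all (every regular component is a homogeneous tube of rank~1), so your argument cannot even get started there. The paper disposes of this case separately by citing \cite{shermanz}, and only then assumes $Q$ has at least three vertices, which guarantees the existence of an exceptional tube of rank $p > 1$. You should add this case distinction at the outset.
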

		\begin{proof}
			If $Q$ is the Kronecker quiver, this was proved in \cite{shermanz} so that we may assume that $Q$ has at least three vertices and thus, the Auslander-Reiten quiver of mod-$\kQ$ contains an exceptional tube $\mathcal T$ of rank $p >1$ whose quasi-simple modules, denoted by $R_i$ with $i \in \Z/p\Z$, are ordered in such a way that $\tau R_i \simeq R_{i-1}$ for any $i \in \Z/p\Z$.

			Then it follows from \cite[Proposition 3.3]{Dupont:transverse} that 
			$$F_n(\u X_M) = \u \Delta_{n,p}(\u X_{R_1}, \ldots, \u X_{R_{np}})$$
			for any $n \geq 1$.

			For any $i \in \Z/p\Z$ and for any dimension vector $\e=(e_i)_{i \in Q_0} \in \Z_{\geq 0}^{Q_0}$, we set 
			$$\u L(R_i, {\e}) = \chi(\Gr_{\e}(M)) \prod_{j \in Q_0} x_j^{-\<\e, S_j \> - \<S_j, \ddim R_i - \e\>}$$
			and
			$$\tau_i = \u L(R_i,0) \textrm{ and } \nu_i = \sum_{\mathbf e \neq \ddim R_i, 0} \u L(R_i,\mathbf e).$$

			We thus get 
			$$F_n(\u X_M) = \u \Delta_{n,p}(\tau_1+\nu_1+\frac{1}{\tau_2}, \cdots, \tau_{np}+\nu_{np}+\frac{1}{\tau_1})$$
			which belongs to $\Z_{\geq 0}[\tau_i^{\pm 1}, \nu_i | 1 \leq i \leq np]$ by Lemma \ref{lem:positivitedeltap}. Now, for any $i \in \Z/p\Z$, the module $R_i$ is quasi-simple in an exceptional tube so that it is rigid and thus each $\tau_i^{\pm 1}$ and each $\nu_i$ belongs to $\Z[\x^{\pm 1}]$. It follows that $F_n(\u X_M) \in \Z[\x^{\pm 1}]$.
		\end{proof}

		\begin{lem}\label{lem:tame}
			Let $Q$ be an affine quiver and let $M$ be an indecomposable $\kQ$-module in a homogeneous tube. Then $\u X_M \in \Z[\x^{\pm 1}]$.
		\end{lem}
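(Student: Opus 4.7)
The plan is to identify $\u X_M$ with a Chebyshev polynomial of the second kind evaluated at $\u X_R$ and then reduce the positivity claim to that of the Chebyshev polynomials of the first kind already handled in Lemma~\ref{lem:Fnpos}. Since the tube containing $M$ is homogeneous, $\tau R \simeq R$ for the quasi-simple module $R$ at its mouth, so $M \simeq R^{(n)}$ for some $n \geq 1$ and every quasi-composition factor of $M$ coincides with $R$. Applying the coefficient-free specialisation of the generalised Chebyshev formula \cite[Theorem 2]{Dupont:qChebyshev}, I would write
$$\u X_M = P_n(1, \ldots, 1, \u X_R, \ldots, \u X_R).$$
Setting all $q_i=1$ and all $t_i$ equal to a single indeterminate $t$ in the tridiagonal determinant \eqref{eq:detPn} yields the recursion $P_n = tP_{n-1}-P_{n-2}$ with $P_0=1$ and $P_1=t$, which is exactly the recursion $S_0 = 1$, $S_1(t)=t$, $S_{n+1}(t)=tS_n(t)-S_{n-1}(t)$ defining the normalised Chebyshev polynomial of the second kind $S_n$. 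Hence $\u X_M = S_n(\u X_R)$.

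The key combinatorial input is then the elementary identity $F_n(t)=S_n(t)-S_{n-2}(t)$ for $n\geq 2$ (with $F_1=S_1$), obtained by comparing recursions and initial conditions. Telescoping this identity gives
$$S_n(t) = \epsilon_n + \sum_{\substack{k\geq 0 \\ n-2k\geq 1}} F_{n-2k}(t),$$
where $\epsilon_n=1$ when $n$ is even (coming from $S_0=1$) and $\epsilon_n=0$ when $n$ is odd.

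Finally, applying Lemma~\ref{lem:Fnpos} to the quasi-simple module $R$ shows that each term $F_{n-2k}(\u X_R)$ lies in $\Z_{\geq 0}[\x^{\pm 1}]$; since this set is closed under sums and contains $1$, I conclude $\u X_M = S_n(\u X_R) \in \Z_{\geq 0}[\x^{\pm 1}]$. The only non-formal input is the positivity of the $F_n(\u X_R)$ provided by Lemma~\ref{lem:Fnpos} (itself a consequence of the difference property and Lemma~\ref{lem:positivitedeltap}); the rest of the argument is a purely combinatorial manipulation of Chebyshev polynomials, so I do not anticipate any real obstacle beyond keeping track of the parity of $n$ in the telescoping sum.
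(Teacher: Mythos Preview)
Your proposal is correct and follows essentially the same route as the paper: write $\u X_M = S_n(\u X_R)$ via the generalised Chebyshev formula, decompose $S_n$ as a $\Z_{\geq 0}$-combination of the $F_k$, and invoke Lemma~\ref{lem:Fnpos}. Your handling of the parity of $n$ in the telescoping sum is in fact slightly more careful than the paper's stated identity $S_n=\sum_{k=0}^{\lfloor n/2\rfloor}F_{n-2k}$ (which overshoots by $1$ when $n$ is even, since $F_0=2$ while $S_0=1$), but this discrepancy is harmless for the positivity conclusion.
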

		\begin{proof}
			Let $\mathcal T$ be a homogeneous tube and let $M$ be the quasi-simple module in $\mathcal T$. For any $n \geq 1$ we denote by $M^{(n)}$ the unique indecomposable module in $\mathcal T$ with quasi-length $n$. We need to prove that $\u X_{M^{(n)}} \in \Z_{\geq 0}[\x^{\pm 1}]$ for any $n \geq 1$.

			For any $n \geq 1$ we denote by $S_n \in \Z[x]$ the \emph{$n$-th Chebyshev polynomial of the second kind} given by $S_0(x)=1$, $S_1(x)=x$ and $S_{n+1}(x)=xS_n(x)-S_{n-1}(x)$ for any $n \geq 1$ or equivalently by $S_n(x)=P_n(1, \ldots, 1, x, \ldots, x)$. 

			It follows from \cite[Theorem 2]{Dupont:qChebyshev} (see also \cite{CZ}) that for any $n \geq 1$, we have $\u X_{M^{(n)}} = S_n(\u X_M)$. It is well-known that $S_n = \sum_{k=0}^{\lfloor n/2 \rfloor} F_{n-2k}$ so that 
			$$\u X_{M^{(n)}} = \sum_{k=0}^{\lfloor n/2 \rfloor} F_{n-2k}(\u X_M)$$
			and it thus follows from Lemma \ref{lem:Fnpos} that each of the terms in this sum belongs to $\Z[\x^{\pm 1}]$. This proves the lemma and Theorem \ref{theorem:tame} follows.
		\end{proof}
	
\section{Application to cluster bases in cluster algebras}\label{section:bases}
	Let $Q$ be an acyclic quiver and let $\mathcal A(Q,\x)$ be the corresponding (coefficient-free) cluster algebra. If $Q$ is an affine quiver, one can naturally construct three distinct cluster bases of particular interest in $\mathcal A(Q,\x)$ using the representation theory of $Q$. All these three constructions involve an element of $\mathcal A(Q,\x)$, which we we denote by $X_\delta$ and which is given by the value of $\u X_M$ on any quasi-simple module in a homogeneous tube. Note that this value is independent on the chosen homogeneous tube \cite[Lemma 5.3]{Dupont:genericvariables}.

	The first basis, initially constructed by Sherman and Zelevinsky for the Kronecker quiver \cite{shermanz} and by Cerulli for quivers of type $\widetilde A_{2,1}$ \cite{Cerulli:A21} is given by
	$$\mathcal B_Q = \mathcal M_Q \sqcup \ens{F_n(X_\delta)X_R| n \geq 1, R \textrm{ is a regular rigid $\kQ$-module}}.$$
	It is conjectured in \cite{Dupont:transverse} that this cluster linear basis of $\mathcal A(Q,\x)$ is actually \emph{canonically positive} (or \emph{atomic}) in the sense of \cite{Cerulli:A21}, that is, it is constituted of the extremal elements in the positive cone of $\mathcal A(Q,\x)$.

	The second basis was initially constructed by Caldero and Zelevinsky for the Kronecker quiver \cite{CZ}. It is given by
	$$\mathcal B_Q = \mathcal M_Q \sqcup \ens{S_n(X_\delta)X_R| n \geq 1, R \textrm{ is a regular rigid $\kQ$-module}}$$
	where for any $n \geq 1$, $S_n$ is the $n$-th normalised Chebyshev polynomial of the second kind whose definition is recalled in the proof of Lemma \ref{lem:tame}.
	
	The third basis was considered in \cite{Dupont:genericvariables} and, in a broader context by Geiss, Leclerc and Schr\"oer \cite{GLS:generic}. It is given by
	$$\mathcal B_Q = \mathcal M_Q \sqcup \ens{X_\delta^nX_R| n \geq 1, R \textrm{ is a regular rigid $\kQ$-module}}.$$
	and it is called the \emph{generic basis} of $\mathcal A(Q,\x)$.

	The fact that all these three bases are indeed cluster bases follows from \cite{Dupont:genericvariables,Dupont:BaseAaffine,DXX:basesv3} and more generally \cite{GLS:generic}. We conjecture that these three cluster bases consist of positive elements of $\mathcal A(Q,\x)$. As a byproduct of the methods used in this article, we can partially prove this conjecture. More precisely, we prove Theorem \ref{theorem:bases}, stating that the elements in these three bases can be written as subtraction-free Laurent polynomials in the initial cluster of the cluster algebra $\mathcal A(Q,\x)$.

	\subsection{Proof of Theorem \ref{theorem:bases}}
		It is well-known that for any indeterminate $z$, the element $z^n$ can be written as a $\Z_{\geq 0}$-linear combination of $S_k(z)$ with $k \leq n$ and $S_n(z)$ can be written as a $\Z_{\geq 0}$-linear combination of $F_k(z)$ with $k \leq n$ (see for instance \cite[Section 6]{Dupont:genericvariables}). Thus, any element in $\mathcal G_Q$ can be expressed as a $\Z_{\geq 0}$-linear combination of elements of $\mathcal C_Q$ and each element of $\mathcal C_Q$ can be expressed as a $\Z_{\geq 0}$-linear combination of elements of $\mathcal B_Q$. Thus, it is enough to prove that $\mathcal B_Q \subset \Z_{\geq 0}[x_1^{\pm 1}, \ldots, x_m^{\pm 1}]$ in order to prove the theorem.

		Elements in $\mathcal B_Q$ are either cluster monomials or products of the form $\u X_R F_n(\u X_M)$ where $n \geq 1$ and $R$ is a regular rigid $\kQ$-module and $M$ is a quasi-simple module in a homogeneous tube. According to \cite{Nakajima:cluster,Qin}, every cluster monomial and every element of the form $\u X_R$ where $R$ is regular rigid belongs to the cone $\Z_{\geq 0}[\x^{\pm 1}]$. Moreover, it follows from Lemma \ref{lem:Fnpos} that $F_n(\u X_M) \in \Z_{\geq 0}[\x^{\pm 1}]$. Since $\Z_{\geq 0}[\x^{\pm 1}]$ is stable under product, it follows that $\mathcal B_Q \subset \Z[\x^{\pm 1}]$ and Theorem \ref{theorem:bases} is proved. \qed

\section*{Acknowledgements}
	This paper was written while the author was at the university of Sherbrooke as a CRM-ISM postdoctoral fellow under the supervision of Ibrahim Assem, Thomas Br\"ustle and Virginie Charette. He would like to thank Giovanni Cerulli Irelli and Yann Palu for valuable discussions and comments on the topic.


\newcommand{\etalchar}[1]{$^{#1}$}
\providecommand{\bysame}{\leavevmode\hbox to3em{\hrulefill}\thinspace}
\providecommand{\MR}{\relax\ifhmode\unskip\space\fi MR }
\providecommand{\MRhref}[2]{%
  \href{http://www.ams.org/mathscinet-getitem?mr=#1}{#2}
}
\providecommand{\href}[2]{#2}

\end{document}